 \newtheorem{thm}{Theorem}[section]
 \newtheorem{cor}[thm]{Corollary}
 \newtheorem{prop}[thm]{Corollary}
 \newtheorem{lem}[thm]{Lemma}
 \theoremstyle{remark}
 \numberwithin{equation}{section}
\DeclareMathOperator{\sign}{sign}
\newcommand{\order}{\textup{order}}
\newcommand{\f}{\mathbb{F}_q}
\newcommand{\ff}{\mathbb{F}_p}
\newcommand{\z}{\mathbb{Z}}
\begin{document}
\title[Counting polynomial subset sums]
  {  Counting polynomial subset sums}

\author{Jiyou Li}
\address{Department of Mathematics, Shanghai Jiao Tong University, Shanghai, P.R. China\\
Department of Mathematics, Massachusetts Institute of Technology, Cambridge, MA 02139-4307, USA}
\email{jiyouli@mit.edu}

\author{Daqing Wan}
\address{Department of Mathematics, University of California, Irvine, CA 92697-3875, USA}
\email{dwan@math.uci.edu}



\begin{abstract}

Let $D$ be a subset of a finite commutative ring $R$ with identity. Let $f(x)\in R[x]$ be a polynomial of positive degree $d$.
For integer $0\leq k \leq |D|$, we study the number $N_f(D,k,b)$ of
$k$-subsets $S\subseteq D$ such that
 \begin{align*}
 \sum_{x\in S} f(x)=b.
  \end{align*}
In this paper,  we establish several asymptotic formulas for $N_f(D,k, b)$, depending on the nature of the ring $R$ and $f$.

For $R=\z_n$, let $p=p(n)$ be the smallest prime divisor of $n$, $|D|=n-c \geq C_dn p^{-\frac 1d }+c$ and $f(x)=a_dx^d +\cdots +a_0\in \z[x]$ with $(a_d, \dots, a_1, n)=1$.
Then
 $$\left| N_f(D, k, b)-\frac{1}{n}{n-c \choose k}\right|\leq
 {\delta(n)(n-c)+(1-\delta(n))(C_dnp^{-\frac 1d}+c)+k-1\choose k},$$
partially answering an open question raised by Stanley \cite{St}, where $\delta(n)=\sum_{i\mid n, \mu(i)=-1}\frac 1 i$ and $C_d=e^{1.85d}$.  Furthermore, if $n$ is a prime power, then $\delta(n) =1/p$ and one can take $C_d=4.41$.

For $R=\f$ of characteristic $p$,  let $f(x)\in \f[x]$ be a polynomial of degree $d$ not divisible by $p$ and
 $D\subseteq \f$ with $|D|=q-c\geq (d-1)\sqrt{q}+c$. Then
$$\left| N_f(D, k, b)-\frac{1}{q}{q-c \choose k}\right|\leq
{\frac{q-c}{p}+\frac {p-1}{p}((d-1)q^{\frac 12}+c)+k-1 \choose k}.$$

If $f(x)=ax+b$, then this problem is precisely the well-known subset sum problem over a finite abelian
group. Let $G$ be a finite abelian group and let
$D\subseteq G$ with $|D|=|G|-c\geq c$.   Then
$$\left| N_x(D, k, b)-\frac{1}{|G|}{|G|-c \choose k}\right|\leq
{c + (|G|-2c)\delta(e(G))+k-1 \choose k},$$
where $e(G)$ is the exponent of $G$ and $\delta(n)=\sum_{i\mid n, \mu(i)=-1}\frac 1 i$.
In particular, we give a new short proof for the explicit counting formula for the case $D=G$.




\end{abstract}

\maketitle \numberwithin{equation}{section}
\newtheorem{theorem}{Theorem}[section]
\newtheorem{lemma}[theorem]{Lemma}
\newtheorem{example}[theorem]{Example}
\allowdisplaybreaks

\section{Introduction}
Let $D$ be a subset of a finite commutative ring $R$ with identity.
Let $f(x)\in R[x]$ be a polynomial of degree $d$.
Many problems from combinatorics and number theory are reduced to
computing the number $N_f(D,k,b)$, which is defined as the number of
$k$-subsets $S\subseteq D$ such that
 \begin{align*}
 \sum_{x\in S} f(x)=b.
  \end{align*}
  For example, when $R$ equals $\z_n$,  this problem was raised by Stanley \cite{St} (Page 136).

When $f(x)$ is linear,  we may just take $f(x)=x$.   The definition of $N(D,k,b):=N_x(D,k,b)$ is then defined for $R=G$ to be any finite abelian group (no ring structure is used).
The problem of computing $N(D,k,b)$ is then reduced to the counting version of the $k$-subset sum problem over $G$.

For $G=\z_n$, this problem is a well known \textbf{NP}-hard
problem in theoretical computer science. For a general
finite abelian group $G$, and an arbitrary $D\subseteq G$, determining if
$N(D, k, b)>0$ is an important difficult problem in algorithms and complexity. This has been studied extensively in recent
years,
especially over finite fields and over the group of rational points on an elliptic curve over a finite field, because of
their important applications in coding theory and cryptography, see \cite{Ch},  \cite{ZFW}, \cite{ZW} and the references there.
One expects that the problem is easier if $|D|$ is large
compared to $|G|$ or $D$ has some algebraic structure. For example, the dynamic programming algorithm gives a polynomial time algorithm
to compute $N(D, k, b)$ if $|D| > |G|^{\epsilon}$ for some positive constant $\epsilon>0$.
In the extreme case that $D=G$, an explicit formula for $N(D,
k, b)$  was obtained in Li and Wan \cite{LW3}, see also Kosters \cite{Kos} for a new proof and an improvement.
The sieving argument in \cite{LW3} has been used to obtain a good asymptotic formula for
$N(D,k, b)$ in the more general case that $D$ is close to $G$, for instance, when $|D|\geq \frac 2 3
|G|$. In the case that $G$ is the group of rational points on an elliptical curve over a finite field, please refer to
\cite{LWZ} for a concrete example.


For $G=\mathbb{Z}_n$, the finite cyclic group of $n$ elements and $D=G=\mathbb{Z}_n$, an old result of Ramanathan (1945) gives an explicit formula for
$N({\mathbb{Z}_n}, k,b)$  by using equalities involving Ramanujan's trigonometric sums. A formula for $\sum_{k}N(D, k,b)$ and several
generalizations were given by Stanley and Yoder \cite{SY}, Kitchloo and Patcher \cite{KP}.

 When $G=\mathbb{Z}_p$ is the finite cyclic group of prime order $p$ and $|D| \gg p^{2/3}$ is arbitrary, Erd\H{o}s and Heilbronn proved in their famous paper \cite{EH} that $\sum_{k}N(D, k,b)=\frac {2^p}{p}(1+o(1))$ when $p$ tends to infinity.

When $G$ is the additive group of a finite field ${\mathbb F}_q$ and $|G|-|D|$ is bounded by a constant, an explicit
formula for $N(D, k,b)$ was given in \cite{LW1}.
When $G$ is an arbitrary finite abelian group, and $D=G$ or $D=G^*$, an explicit and efficiently computable formula for $N(D, k,b)$ was given in \cite{LW3}. Kosters \cite{K} gave a different and shorter proof by using methods of group rings.
In this paper, we will give a third short proof.

We  also obtain a general bound for the $k$-subset sum
problem over $G$, which significantly generalizes previous results which assumed $D$
to be very close to $G$. This will be explained shortly later for the case $|G|=p$.

\begin{thm}\label{theorem1.3}Let $G$ be a finite abelian group of order $|G|$.
 Let $D\subseteq G$ with $|D|=|G|-c\geq c$.  Let $N(D, k, b)$
be the number of $k$-subsets in $D$ which sums to $b$. Then
$$\left| N(D, k, b)-\frac{1}{|G|}{|G|-c \choose k}\right|\leq
{c + (|G|-2c)(\sum_{i\mid e(G), \mu(i)=-1}\frac {1}{i})+k-1 \choose k},$$
 where $e(G)$ is the exponent of $G$, which is defined as the maximal order of a nonzero element in $G$.
\end{thm}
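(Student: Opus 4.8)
The plan is to encode the subset-sum condition with additive characters of $G$ and read off the elementary symmetric functions as coefficients of a generating polynomial. Write $\hat G$ for the character group and $\chi_0$ for the trivial character. By orthogonality, $\mathbf{1}[\sum_{x\in S}x=b]=\frac1{|G|}\sum_{\chi\in\hat G}\chi(-b)\prod_{x\in S}\chi(x)$, so summing over all $k$-subsets $S\subseteq D$ gives
$$N(D,k,b)=\frac1{|G|}\sum_{\chi\in\hat G}\chi(-b)\,e_k\big(\{\chi(x):x\in D\}\big),$$
where $e_k(\{\chi(x)\})=[T^k]\prod_{x\in D}(1+\chi(x)T)$ is the $k$-th elementary symmetric function. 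The trivial character contributes exactly $\frac1{|G|}\binom{|D|}{k}=\frac1{|G|}\binom{|G|-c}{k}$, the claimed main term, and since $|\chi(-b)|=1$ the error is bounded by $\frac1{|G|}\sum_{\chi\ne\chi_0}\big|[T^k]\prod_{x\in D}(1+\chi(x)T)\big|$.

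To control each nontrivial character I would exploit the full group. If $\chi$ has order $m$ (so $m\mid e(G)$), then as $x$ ranges over $G$ the value $\chi(x)$ hits every $m$-th root of unity equally often, whence $\prod_{x\in G}(1+\chi(x)T)=(1-(-T)^m)^{|G|/m}$. Writing $D^c=G\setminus D$ (of size $c$) and factoring $\prod_{x\in G}=\prod_{x\in D}\prod_{x\in D^c}$ gives
$$\prod_{x\in D}(1+\chi(x)T)=\frac{(1-(-T)^m)^{|G|/m}}{\prod_{x\in D^c}(1+\chi(x)T)}.$$
I would then pass to positive majorants: the coefficients of $(1-(-T)^m)^{|G|/m}$ are dominated in absolute value by those of $(1+T^m)^{|G|/m}$, and expanding the power series $1/\prod_{x\in D^c}(1+\chi(x)T)$ gives coefficients dominated by those of $(1-T)^{-c}$. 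Since both majorants have nonnegative coefficients, this yields the per-character bound $\big|[T^k]\prod_{x\in D}(1+\chi(x)T)\big|\le[T^k]\,(1+T^m)^{|G|/m}(1-T)^{-c}$. The factor $(1-T)^{-c}$ is exactly what produces the $c$ inside the target binomial, while the sparse numerator $(1+T^m)^{|G|/m}$, supported on multiples of $m$, is the source of the savings measured by $1/m$.

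It then remains to sum these majorants over the nontrivial characters. Grouping by order, I would use that the number of characters of exact order $m$ is determined by the structure of $G$ (equivalently, by its elements of order $m$) and Möbius-invert the count of characters of order dividing $m$. The emerging arithmetic should collapse the weighted sum $\frac1{|G|}\sum_{\chi\ne\chi_0}(1+T^m)^{|G|/m}$ into a single clean series governed by the exponent $(|G|-2c)\delta(e(G))$, using the identity $\delta(n)=\frac12\big(\prod_{p\mid n}(1+\tfrac1p)-\prod_{p\mid n}(1-\tfrac1p)\big)$, which rewrites the Möbius-weighted sum $\delta(n)=\sum_{i\mid n,\ \mu(i)=-1}\tfrac1i$ as a product over primes. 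The main obstacle is precisely this last step: showing that, uniformly in $k$, the character-order sum is majorized coefficientwise by $(1-T)^{-(c+(|G|-2c)\delta(e(G)))}$, whose $T^k$-coefficient is $\binom{c+(|G|-2c)\delta(e(G))+k-1}{k}$. This is where the interplay between the order-multiplicities $a_m$, the binomial coefficients $\binom{|G|/m}{j/m}$, and the prime factorization of $e(G)$ must be reconciled into the stated closed form.

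Finally, for the borderline case $D=G$ (so $c=0$) the same two ingredients give the promised short proof of the exact formula: there the error is computed rather than bounded, by evaluating $\sum_{\mathrm{ord}(\chi)=m}\chi(-b)$ as a Ramanujan-type sum via Möbius inversion and combining it with the coefficients of $(1-(-T)^m)^{|G|/m}$; no majorization is needed, and the Möbius weights assemble directly into the explicit count.
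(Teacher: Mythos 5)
Your opening reduction is correct and is in fact the same as the paper's starting point: the identity $N(D,k,b)=\frac{1}{|G|}\sum_{\chi\in\hat{G}}\chi(-b)\,[T^k]\prod_{x\in D}(1+\chi(x)T)$ is Lemma~\ref{lem3.1} in disguise (the paper derives it via the sieving formula, which amounts to Newton's identities expressing $e_k$ through the power sums $p_i=\sum_{x\in D}\chi^i(x)$), and the trivial character produces the main term. The genuine gap is your per-character majorization. Factoring $\prod_{x\in D}(1+\chi(x)T)=(1-(-T)^m)^{|G|/m}\big/\prod_{x\in G\setminus D}(1+\chi(x)T)$ and then majorizing the numerator by $(1+T^m)^{|G|/m}$ and the inverted denominator by $(1-T)^{-c}$ \emph{separately} destroys the cancellation between the two factors, and the step you yourself flag as ``the main obstacle'' --- that the resulting character sum is coefficientwise dominated by $(1-T)^{-(c+(|G|-2c)\delta(e(G)))}$ --- is not merely unproved, it is false. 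Take $G=(\mathbb{Z}/2\mathbb{Z})^2$, $c=2$ (so $|D|=2\geq c$ is allowed by the hypothesis), $k=2$: every nontrivial $\chi$ has order $m=2$, your majorant gives $[T^2](1+T^2)^2(1-T)^{-2}=5$, hence an error bound of $\frac{3}{4}\cdot 5=3.75$, while the theorem asserts the error is at most ${2+0+2-1 \choose 2}=3$ (here $(|G|-2c)\delta(e(G))=0$). Since every nontrivial character contributes the same excessive majorant, no regrouping of the sum by character orders can repair this; the failure is already per character.

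The loss is structural: your numerator charges the whole group, complement included, and after replacing $1/\prod_{x\in G\setminus D}(1+\chi(x)T)$ by $(1-T)^{-c}$ the complement is effectively counted with the wrong sign, inflating the exponent by roughly $2c/m$. The paper avoids the factorization altogether: it bounds the power sums directly, namely $p_i=|D|$ when $\chi^i=\chi_0$ and $|p_i|=|\sum_{x\in G\setminus D}\chi^i(x)|\leq c$ when $\chi^i\neq\chi_0$ (complete the sum over $G$, where it vanishes), and then majorizes inside $\prod_{x\in D}(1+\chi(x)T)=\exp\bigl(\sum_{i\geq 1}(-1)^{i-1}p_iT^i/i\bigr)$, using that $\exp$ has nonnegative Taylor coefficients. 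This yields the per-character majorant $(1-T)^{-c}(1-T^{m})^{-(|D|-c)/m}$, whose key exponent is $(|D|-c)/m=(|G|-2c)/m$ rather than your $|G|/m$. In the paper this is organized as Lemma~\ref{lem3.1} together with the trivial bound on $|F_{\tau}(\psi)|$, followed by the combinatorial Lemma~\ref{lem6.3}, whose inclusion--exclusion over the primes dividing $e(G)$ and coefficient-domination steps (Lemmas~\ref{lem2.4} and~\ref{lem2.3}) produce exactly ${c+(|G|-2c)\delta(e(G))+k-1 \choose k}$ uniformly over all nontrivial characters. If you substitute this power-sum bound for your factorization step, the rest of your outline goes through; your final remark on the case $D=G$ (where $c=0$ and the computation is exact) is consistent with the paper's Section 7 and is unaffected by this issue.
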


In order for this bound to be non-trivial, at least $k$ and $c$ need to satisfy
$$|G|-c>(|G|-2c)(\sum_{i\mid e(G), \mu(i)=-1}\frac {1}{i}) +k+ c.$$

\begin{prop}  Let $G$ be a finite elementary abelian $p$-group (thus $e(G)=p$). Then,
$$\left| N(D, k, b)-\frac{1}{|G|}{|G|-c \choose k}\right|\leq
{\frac{(|G|-2c)}{p}+c+k-1 \choose k}.$$
\end{prop}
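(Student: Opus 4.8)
The plan is to derive this statement directly from Theorem~\ref{theorem1.3}, since for an elementary abelian $p$-group the exponent is forced to equal $p$ and the M\"obius-weighted sum appearing in the general bound collapses to a single term. First I would recall that $G$ being elementary abelian means every nonzero element has order $p$, so the exponent is $e(G)=p$. This reduces the entire task to evaluating the quantity $\sum_{i\mid e(G),\,\mu(i)=-1}\frac{1}{i}$ that controls the right-hand side of Theorem~\ref{theorem1.3}, now with $e(G)$ replaced by the prime $p$.

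Next I would compute this sum explicitly. The divisors of the prime $p$ are exactly $1$ and $p$, with $\mu(1)=1$ and $\mu(p)=-1$. Hence the only divisor contributing to the restricted sum is $i=p$, and we obtain $\sum_{i\mid p,\,\mu(i)=-1}\frac{1}{i}=\frac{1}{p}$.

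Finally, substituting $e(G)=p$ and this value into the bound of Theorem~\ref{theorem1.3} gives
$$\left| N(D, k, b)-\frac{1}{|G|}{|G|-c \choose k}\right|\leq {c+(|G|-2c)\cdot\tfrac{1}{p}+k-1 \choose k}={\tfrac{|G|-2c}{p}+c+k-1 \choose k},$$
which is exactly the claimed inequality, the hypothesis $|D|=|G|-c\geq c$ being inherited unchanged from Theorem~\ref{theorem1.3}.

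Since the statement is merely a specialization of an already-established theorem, there is no genuine obstacle: the whole content is the observation that $e(G)=p$ together with the one-line M\"obius computation. The only point requiring any care is to apply the formula of Theorem~\ref{theorem1.3} with the correct exponent and to verify that exactly one divisor survives the condition $\mu(i)=-1$, so that the bracketed expression simplifies precisely to $\tfrac{|G|-2c}{p}+c$.
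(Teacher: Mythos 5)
Your proposal is correct and matches the paper's (implicit) derivation: the statement is presented there as an immediate corollary of Theorem~\ref{theorem1.3}, obtained exactly as you do by noting $e(G)=p$ and that the only divisor of $p$ with $\mu(i)=-1$ is $i=p$, so the M\"obius sum equals $1/p$.
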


In the case $|G|=p$, to obtain a non-trivial estimate, one needs to solve
$$p-c >\frac{(p-2c)}{p}+k+c.$$
Asymptotically, for smaller $k$, we could take $c$ as large as $p/2$.

Let us turn to the cases for general $f(x)$.
Few results are known for the number $N_f(D, k, b)$ when $f(x)$ is a polynomial of higher degree. In the case that
$f(x)$ is the simplest monomial $x^d$, $R$ is the prime field $\ff$ and
$D=\ff^*$, it was first proved by Odlyzko-Stanley \cite{OS} that
 \begin{align*}\left |N_{x^d}(\ff^*, b)-\frac{2^{p-1}}{p}\right|\leq
e^{O(d\sqrt{p}\log{p})},
\end{align*}
where $N_{x^d}(\ff^*, b)=\sum_{k=0}^{p-1} N_{x^d}(\ff^*, k,  b)$.

For a general finite field $R={\mathbb{F}}_q$, the
finite field of $q=p^t$ elements, Zhu and Wan \cite{W} proved the following more precise result:
\begin{align*}\left | N_{x^d}(\f^*, k, b)- \frac{1}{q}{q-1 \choose k}  \right|\leq
2q^{-1/2}{ d\sqrt{q}+q/p+k  \choose k}.
\end{align*}
Since $N_{x^d}(\f^*, b)=\sum_{k=0}^{q-1} N_{x^d}(\f^*, k, b)$, one can then deduce
the following explicit bound
\begin{align*}\left |N_{x^d}(\f^*, b)-\frac{2^{q-1}}{q}\right|\leq
\frac {4p}{\sqrt{2\pi} q}e^{(d\sqrt{q}+q/p)\log{q}},
 \end{align*}
which extends the Odlyzko-Stanley bound from a prime finite field to a general finite field.
Note that simply replacing $p$ with $q$ in the Odlyzko-Stanley bound is not known to be true
and is probably not true if $q$ is a high power of $p$. It is true if $q=p^2$.

These bounds are nontrivial only for $d\leq \sqrt{q}$. When $q=p$
is prime, a series of subsequent work had been made by Garcia-Voloch, Shparlinski, Heath-Brown, Heath-Brown-
Konyagin and Konyagin. They used variations of Stepanov¡¯s method and released the limit on the degree to $d\leq {p}^{3/4-\epsilon}$. For more details, please refer to \cite{B2}.
Using their remarkable Gauss sum bound proved by using additive combinatorics and harmonic analysis,  Bourgain, Glibichuk and Konyagin \cite{BGK, BK} proved that if $d < p^{1-\delta}$ for some
constant $\delta >0$, then there is a constant $0<\epsilon=\epsilon(\delta)<\delta$ such that
\begin{align*}\left |N_{x^d}(\ff^*,
b)-\frac{2^{p-1}}{p}\right|\leq e^{O(p^{1-\epsilon})}.
 \end{align*}

By combining Bourgain's bound and Li and Wan's  sieving technique
\cite{LW2}, Li \cite{Li} proved a refined result that if $d<p^{1-\delta}$, then there
is a constant $0<\epsilon=\epsilon(\delta)<\delta$ such that
\begin{align*}
\left| N_{x^d}(\ff^*, k, b)-\frac{1}{p}{p-1 \choose k}  \right|\leq
{p^{1-\epsilon}+dk-d \choose k}.
 \end{align*}
It would be interesting to extend this type of result to a general finite field of characteristic $p$.

In this paper, we obtain several asymptotic formulas for $N_f(D, k, b)$  when $f(x)$ is a
general higher degree polynomial. In the case that $R=\z_n$,  the finite ring of $n$ residues mod $n$, and $f$ is a polynomial of degree $d$ over the integers,  we have the following bound, proved using Hua's
bound for exponential sums and our sieving technique.

  \begin{thm}\label{theorem1.2} Let $R=\z_n$ and  $p=p(n)$ be the smallest prime divisor of $n$.
Assume $|D|=n-c \geq C_dn p^{-\frac 1d }+c$ and $f(x)=a_dx^d +\cdots +a_0\in \z[x]$ with $(a_d, \dots, a_1, n)=1$.
Then we have
 $$\left| N_f(D, k, b)-\frac{1}{n}{n-c \choose k}\right|\leq
{\delta(n)(n-c)+(1-\delta(n))(C_dnp^{-\frac 1d}+c)+k-1\choose k},$$
 where $\delta(n)=\sum_{i\mid n, \mu(i)=-1}\frac 1 i$ and  $C_d=e^{1.85d}$.
  Furthermore, if $n$ is a prime power, then $\delta(n) =1/p$ and the constant
$e^{1.85d}$ can be improved to the absolute constant $4.41$.
 \end{thm}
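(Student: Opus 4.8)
The plan is to combine an additive-character expansion with the Li--Wan distinct-coordinate sieve, feeding Hua's complete exponential sum bound into the power sums that the sieve produces. Write $\psi(y)=\exp(2\pi\sqrt{-1}\,y/n)$. By orthogonality of characters on $\z_n$,
\[
N_f(D,k,b)=\frac1n\sum_{a=0}^{n-1}\psi(-ab)\,e_k\big(\{\psi(af(x)):x\in D\}\big),
\]
where $e_k$ is the $k$-th elementary symmetric function. The $a=0$ term is exactly $\frac1n\binom{n-c}{k}$, so the error is $\frac1n\sum_{a=1}^{n-1}\psi(-ab)\,e_k(\cdots)$. Passing to ordered distinct tuples, $k!$ times the error equals $\frac1n\sum_{a\neq0}\psi(-ab)T_k(a)$ with $T_k(a)=\sum_{\textup{distinct }(x_1,\dots,x_k)\in D^k}\prod_i\psi(af(x_i))$.

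Next I would apply the Li--Wan sieve over the partition lattice $\Pi_k$, writing $T_k(a)=\sum_{\pi\in\Pi_k}\mu(\hat{0},\pi)\prod_{B\in\pi}P_{|B|}(a)$, where $\mu(\hat{0},\pi)=\prod_{B}(-1)^{|B|-1}(|B|-1)!$ and $P_m(a)=\sum_{x\in D}\psi(amf(x))$. The mechanism that will convert a power-sum bound into the desired binomial is the exponential formula: for any weights $W_m$ one has $\sum_{\pi\in\Pi_k}|\mu(\hat{0},\pi)|\prod_{B\in\pi}W_{|B|}=k!\,[t^k]\exp\!\big(\sum_{m\ge1}\tfrac{W_m}{m}t^m\big)$, so a uniform bound $W_m\le B$ gives $k!\,[t^k](1-t)^{-B}=k!\binom{B+k-1}{k}$. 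The whole difficulty is therefore to identify the correct \emph{effective} value of $B$ after averaging over $a$.

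To bound the power sums I would use $|P_m(a)|\le|S(am)|+c$, where $S(c')=\sum_{x\bmod n}\psi(c'f(x))$ is the complete sum and the $+c$ absorbs the $c$ missing points. Setting $g=\gcd(am,n)$ and reducing the modulus by $g$ via CRT, the hypothesis $(a_d,\dots,a_1,n)=1$ survives for the reduced polynomial, so Hua's inequality gives $|S(am)|\le C_d\,g^{1/d}n^{1-1/d}$; the worst proper divisor $g=n/p$ ($p$ the least prime) yields exactly $C_d n p^{-1/d}$. Thus for every \emph{good} $a$ (with $am\not\equiv0$) we get $|P_m(a)|\le C_d n p^{-1/d}+c$, while for \emph{bad} $a$ (with $am\equiv0$) only the trivial $|P_m(a)|\le n-c$ holds. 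Here $C_d=e^{1.85d}$ is the explicit form of Hua's bound, and for $n=p^t$ the sharper prime-power complete-sum estimates replace it by the absolute constant $4.41$.

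The final, and hardest, step is the average over $a$. A naive triangle inequality fails: a single residue with $\gcd(a,n)$ large already forces a term $\binom{B_a+k-1}{k}$ with $B_a=n-c$, of entirely the wrong size, so the bad residues cannot be absorbed term by term. The plan is instead to preserve cancellation: the bad-$a$ conditions are the divisibility conditions $n/\gcd(n,m)\mid a$, and after convolving their densities with the sign of the partition-lattice Möbius function and summing over $a$ by Ramanujan/CRT, a Möbius inversion over the squarefree divisors of $n$ collapses the bad contribution to exactly the weight $\delta(n)=\sum_{i\mid n,\,\mu(i)=-1}\tfrac1i$ (the surviving ``odd squarefree divisor'' pattern being the fingerprint of this inclusion--exclusion). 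This produces the effective bound $B=\delta(n)(n-c)+(1-\delta(n))(C_d n p^{-1/d}+c)$, whereupon the exponential-formula mechanism gives the stated $\binom{B+k-1}{k}$. I expect this signed averaging---carried out without passing to absolute values---to be the main obstacle; the prime-power case is easier precisely because $p^t$ has a single relevant divisor, forcing $\delta(n)=1/p$ and permitting the improved constant $4.41$.
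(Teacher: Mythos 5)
Your setup is exactly the paper's up to the last step: the character expansion, the sieve over the partition lattice (equivalent here to the paper's conjugacy-class form, Lemma~\ref{lem3.1}), the exponential-formula mechanism $\sum_{\pi}|\mu(\hat0,\pi)|\prod_B W_{|B|}=k!\,[t^k]\exp(\sum_m W_m t^m/m)$, and the reduction of $|S(am)|$ to Hua's bound at the modulus $n/\gcd(am,n)$ (Lemma~\ref{5.1}) are all correct. The gap is your final step, and it rests on a false premise. You assert that the triangle inequality over $a$ fails because a residue with large $\gcd(a,n)$ forces $B_a=n-c$. It does not: for a fixed $a$ whose character $\psi_a$ has order $h>1$, only the block sizes $m$ with $(m,h)>1$ receive the trivial bound $q=n-c$, while those with $(m,h)=1$ get $s=C_dnp^{-1/d}+c$, and inside the exponential formula the bad block sizes carry weight $\sum_{(m,h)>1}t^m/m$ rather than $\sum_m t^m/m$. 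Expanding that sum by inclusion--exclusion over the primes dividing $h$ gives
\begin{equation*}
k!\,[t^k]\,\frac{\prod_{e\mid h,\ \mu(e)=1}(1-t^{e})^{(q-s)/e}}{(1-t)^{s}\prod_{e\mid h,\ \mu(e)=-1}(1-t^{e})^{(q-s)/e}}
\ \le\ k!\,[t^k]\,(1-t)^{-s-(q-s)\delta(h)}
\ =\ k!\binom{s+(q-s)\delta(h)+k-1}{k},
\end{equation*}
the inequality being coefficient-wise domination (drop the numerator factors, then replace $(1-t^e)^{-\alpha}$ by $(1-t)^{-\alpha}$; this is the content of Lemmas~\ref{lem2.4}--\ref{lem6.3}). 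Since $\delta(h)\le\delta(n)$, each of the $n-1$ nontrivial characters contributes at most this same quantity, and the plain triangle inequality together with the prefactor $1/n$ finishes the proof. So $\delta(n)$ is the fingerprint of inclusion--exclusion over \emph{block sizes sharing a factor with the order of the character}, computed per character with absolute values throughout --- not of any signed averaging over $a$.

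The signed averaging you propose is, moreover, not executable as described: once you replace each $|P_m(a)|$ by an upper bound you have discarded the phases of the incomplete sums $\sum_{x\in D}\psi(amf(x))$, which admit no closed form, so there is nothing left to which a ``Ramanujan/CRT'' summation over $a$ could apply; and without passing to absolute values you cannot invoke the uniform-bound step of the exponential formula at all. Two smaller remarks. First, your good/bad split (bad only when $am\equiv 0 \bmod n$) is finer than the paper's (bad when $(m,h)>1$) and is legitimate, since Hua at the reduced modulus $n/\gcd(am,n)>1$ still yields at most $C_dnp^{-1/d}$; fed into the per-character computation via Lemma~\ref{lem6.4} it would even give the weight $1/p$ in place of $\delta(n)$ --- but it must be justified by the weighted average over block sizes, not by cancellation over $a$. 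Second, the prime-power improvement has nothing to do with averaging: it is simply the Cochrane--Zheng constant $4.41$ replacing $e^{1.85d}$ in the complete-sum estimate, plus the fact that $\delta(p^t)=1/p$.
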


Note that the above bound is pretty good for $n$ with only large prime factors so that
$\delta(n)=\sum_{i\mid n, \mu(i)=-1}\frac {1}{i}$ is relatively small.

When $R=\f$ and $f$ is a polynomial of degree $d$ over
$\f$,  we obtain a better bound thanks to the Weil bound. In this
case, for simplicity, we suppose  that $f(x)\in \f[x]$ is a polynomial of degree
$d$, $d$ is not divisible by $p$ and $d<q$ since $x^q =x$ for all $x\in \f$.

 \begin{thm}\label{theorem1.1} Let $f(x)\in \f[x]$ be a polynomial of degree $d$ not divisible by $p$.
 For $R=\f$ and $|D|=q-c\geq (d-1)\sqrt{q}+c$, we have
$$\left| N_f(D, k, b)-\frac{1}{q}{q-c \choose k}\right|\leq
{\frac{q-c}{p}+\frac {p-1}{p}((d-1)q^{\frac 12}+c)+k-1 \choose k}.$$
   \end{thm}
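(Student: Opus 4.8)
The plan is to isolate the subset-sum condition by additive characters and reduce the estimate to bounding a single elementary symmetric function for each nontrivial character. Write $N=|D|=q-c$ and let $\psi$ run over the additive characters of $\f$, with $\psi_0$ the trivial one. By orthogonality, $\frac1q\sum_\psi\psi(t)$ equals $1$ if $t=0$ and $0$ otherwise, so
\begin{align*}
N_f(D,k,b)=\sum_{\substack{S\subseteq D\\ |S|=k}}\frac1q\sum_\psi\psi\Big(\sum_{x\in S}f(x)-b\Big)=\frac1q\binom{N}{k}+\frac1q\sum_{\psi\neq\psi_0}\psi(-b)\,e_k(\psi),
\end{align*}
where $e_k(\psi)$ is the $k$-th elementary symmetric function of the $N$ complex numbers $\{\psi(f(x)):x\in D\}$, and $\psi_0$ furnishes the main term $\frac1q\binom Nk$. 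It then suffices to show $|e_k(\psi)|\le\binom{\frac{N}{p}+\frac{p-1}{p}((d-1)\sqrt q+c)+k-1}{k}$ for every nontrivial $\psi$, since the triangle inequality over the $q-1$ nontrivial characters together with the factor $\frac1q$ gives the claim (in fact with the marginally better constant $\frac{q-1}{q}$).

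Next I would relate $e_k(\psi)$ to power sums. Put $S_m(\psi)=\sum_{x\in D}\psi(f(x))^m=\sum_{x\in D}\psi(mf(x))$. Via the exponential formula, equivalently the distinct-coordinate sieving technique of Li and Wan \cite{LW2} applied to the symmetric product $\prod_{x\in S}\psi(f(x))$, Newton's identities give the termwise estimate $|e_k(\psi)|\le[t^k]\exp\big(\sum_{m\ge1}\frac{|S_m(\psi)|}{m}t^m\big)$ for any upper bounds on the $|S_m(\psi)|$. To bound the power sums, note that since $\psi$ is additive, $\psi(mf(x))$ depends only on $m\bmod p$. When $p\mid m$ we have $\psi(mf(x))=1$, hence $S_m(\psi)=N$. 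When $p\nmid m$ the polynomial $mf$ has degree exactly $d$ with $p\nmid d$, so Weil's bound gives $|\sum_{x\in\f}\psi(mf(x))|\le(d-1)\sqrt q$; deleting the $c$ points of $\f\setminus D$, each contributing a term of modulus one, yields $|S_m(\psi)|\le(d-1)\sqrt q+c=:B$.

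Finally I would assemble the generating function in closed form. Taking $b_m=N$ for $p\mid m$ and $b_m=B$ otherwise, and using $\sum_{p\mid m}\frac{t^m}{m}=-\frac1p\log(1-t^p)$,
\begin{align*}
\exp\Big(\sum_{m\ge1}\frac{b_m}{m}t^m\Big)=(1-t)^{-B}(1-t^p)^{-(N-B)/p}=(1-t)^{-r}\,(1+t+\cdots+t^{p-1})^{-(N-B)/p},
\end{align*}
where $r=\frac Np+\frac{p-1}pB$ is exactly the quantity appearing in the target bound. The size hypothesis $|D|=q-c\ge(d-1)\sqrt q+c$ is precisely $N\ge B$, so the exponent $(N-B)/p$ is nonnegative and every coefficient above is nonnegative. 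It remains to prove the coefficient comparison $[t^k]\big[(1-t)^{-r}(1+t+\cdots+t^{p-1})^{-(N-B)/p}\big]\le\binom{r+k-1}{k}=[t^k](1-t)^{-r}$, which is the technical heart.

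I expect to establish this comparison by expanding both sides as sums of products of binomial coefficients, indexed by $a+pb=k$ on the left versus $a'+b'=k$ on the right, and matching the left-hand term of index $b$ with the right-hand term of index $b'=pb$: the first binomial factor agrees, so the comparison reduces to $\binom{s+b-1}{b}\le\binom{s+pb-1}{pb}$ with $s=(N-B)/p$, which holds termwise whenever $s\ge1$ since $\binom{s+\cdot-1}{\cdot}$ is then nondecreasing, while the right side carries extra nonnegative terms for $b'$ not divisible by $p$. The main obstacle is the narrow remaining range $0\le s<1$, where $D$ only barely exceeds the threshold and the termwise inequality can fail; there one checks that $B\ge1$ necessarily holds and instead exploits the nonnegative contribution of the convolution with $(1-t)^{-B}$. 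Carrying out this comparison cleanly across all parameter ranges is the delicate point, and it is exactly where the hypothesis $N\ge B$ is indispensable.
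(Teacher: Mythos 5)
Your proposal is correct and follows essentially the same route as the paper: orthogonality of additive characters followed by the exponential-formula/Newton-identity expansion of $e_k(\psi)$ is exactly the symmetric-function form of the Li--Wan sieve used in Lemma \ref{lem3.1}, your power-sum bounds (trivial bound $N$ when $p\mid m$, Weil bound $B=(d-1)\sqrt{q}+c$ when $p\nmid m$) are the paper's Corollary \ref{4.2} combined with the $p$-elementary structure of $(\f,+)$, and your closed form $(1-t)^{-B}(1-t^p)^{-(N-B)/p}$ together with the coefficient comparison against $(1-t)^{-r}$ is precisely the paper's Lemma \ref{lem6.4} in the case $d=p$. The ``delicate point'' you flag is genuine but is not a defect of your argument relative to the paper's: the paper states Lemma \ref{lem6.4} for arbitrary nonnegative real parameters with proof omitted, while its supporting Lemmas \ref{lem2.4} and \ref{lem2.3} are proved only for integer exponents (and the bare termwise inequality $[t^k](1-t^p)^{-s}\le[t^k](1-t)^{-s}$ does fail for $0<s<1$); your two-case plan does close this --- termwise matching handles $(N-B)/p\ge 1$, and in the remaining range one has $B\ge 1$, the partial sums of the coefficients of $(1-t^p)^{-s}$ are dominated by those of $(1-t)^{-s}$ (both equal hockey-stick sums, the former over a sparser index set), and convolving with the nondecreasing coefficient sequence of $(1-t)^{-B}$ preserves this domination by Abel summation.
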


In particular, if $q=p$ is a prime,  then we have a nice ``quadratic
root" bound.

 \begin{cor} Let $f(x)\in \ff[x]$ be a polynomial of degree $0<d<p$.
  For $R=\ff$ and $|D|=p-c\geq (d-1)\sqrt{p}+c$, we have
$$\left| N_f(D, k, b)-\frac{1}{p}{p-c \choose k}\right|\leq
{(d-1)p^{\frac 12}+c+k \choose k}.$$
   \end{cor}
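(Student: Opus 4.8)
The plan is to derive the corollary directly from Theorem \ref{theorem1.1} by specializing to $q = p$ and then simplifying the right-hand side. When $q = p$ we have $\f = \ff$, so the standing hypothesis that $d$ is not divisible by $p$, combined with $d \geq 1$ and the assumption $d < q = p$, yields exactly $0 < d < p$, matching the corollary's hypothesis; the size condition $|D| = p - c \geq (d-1)\sqrt{p} + c$ is identical in both statements. Hence Theorem \ref{theorem1.1} applies verbatim and gives
$$\left| N_f(D, k, b)-\frac{1}{p}{p-c \choose k}\right|\leq {\frac{p-c}{p}+\frac {p-1}{p}((d-1)p^{\frac 12}+c)+k-1 \choose k}.$$
It then remains only to compare this upper binomial coefficient with the one claimed in the corollary, namely ${(d-1)p^{1/2} + c + k \choose k}$.

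The key step is an elementary inequality between the two upper arguments. Writing $A = (d-1)\sqrt{p}$, I would rewrite $\frac{p-c}{p} = 1 - \frac{c}{p}$ and $\frac{p-1}{p}(A+c) = (A+c) - \frac{A+c}{p}$, so that the top of the theorem's binomial becomes
$$1 - \frac{c}{p} + (A+c) - \frac{A+c}{p} + k - 1 = (A + c + k) - \frac{A + 2c}{p}.$$
Since $A = (d-1)\sqrt{p} \geq 0$ and $c \geq 0$, the subtracted term $\frac{A+2c}{p}$ is nonnegative, so this quantity is at most $A + c + k = (d-1)\sqrt{p} + c + k$, which is precisely the upper argument appearing in the corollary.

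Finally I would invoke the monotonicity of the generalized binomial coefficient ${x \choose k} = \frac{x(x-1)\cdots(x-k+1)}{k!}$ in its upper argument $x$. This function is nondecreasing for $x \geq k-1$, because for such $x$ every factor $x - j$ with $0 \leq j \leq k-1$ is nonnegative and increasing in $x$. Both upper arguments here have the form (nonnegative real) $+\,(k-1)$ or (nonnegative real) $+\,k$, hence both are at least $k-1$, so the inequality on the upper arguments passes to the binomial coefficients, yielding the stated bound.

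I do not expect any genuine obstacle in this argument: the entire analytic content resides in the already-established Theorem \ref{theorem1.1}, and the corollary is a clean specialization in which the factor $\frac{p-1}{p} < 1$ together with the term $\frac{p-c}{p} < 1$ conveniently absorbs the additive constants, producing the tidy ``square-root'' bound. The only care needed is to confirm that the comparison of binomial coefficients is legitimate, i.e.\ that we remain in the monotone range $x \geq k-1$, which holds automatically here.
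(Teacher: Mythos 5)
Your proposal is correct and matches the paper's treatment: the paper states this corollary as an immediate specialization of Theorem \ref{theorem1.1} to $q=p$ without further proof, and your computation (rewriting the upper argument as $(d-1)\sqrt{p}+c+k-\frac{(d-1)\sqrt{p}+2c}{p}$ and invoking monotonicity of $\binom{x}{k}$ for $x\geq k-1$) supplies exactly the omitted routine verification.
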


The paper is organized as follows. In Section 2,  we briefly review a distinct coordinate sieving formula. In Section 3,
we establish a general formula for general ring $R$. In the remaining sections, several
more explicit formula are derived.

{\bf Notations}. For $x\in\mathbb{R}$, let  $(x)_0=1$
 and $(x)_k=x (x-1) \cdots (x-k+1)$
for $k\in $ $\mathbb{Z^+}$. For $k\in \mathbb{N}$, ${x \choose k}$
is the binomial coefficient defined by ${x \choose k}=\frac
{(x)_k}{k!}$. For a power series $f(x)$, $[x^k]f(x)$ denotes the coefficient of $x^k$ in $f(x)$.
$\lfloor x \rfloor$ always denotes the largest integer not greater than $x$.

\section{A distinct coordinate sieving formula}
For the purpose of our proof, we briefly introduce the  sieving formula
discovered by Li and Wan \cite{LW2}.
Roughly speaking, this formula significantly improves the classical
inclusion-exclusion sieve for distinct coordinate counting
problems. We cite it here without proof. The first proof of this formula was given in \cite{LW2}.
For a different proof by the theory of partial order please refer to
\cite{LW3}.

Let $\Omega$ be a finite set, and let $\Omega^k$ be the Cartesian
product of $k$ copies of $\Omega$. Let $X$ be a
subset of $\Omega^k$. 
Define $\overline{X}=\{(x_1,x_2,\cdots,x_k)\in X \ | \ x_i\ne x_j,
\forall i\ne j\}.$
Denote $S_k$ to be the symmetric group on $n$ elements.
 For a permutation $\tau$ in $S_k$, the sign of $\tau$ is defined by
  $\sign(\tau)=(-1)^{k-l(\tau)}$, where $l(\tau)$ is the number of
 cycles of $\tau$ including the trivial ones.
Suppose we have the factorization $\tau=(i_1i_2\cdots i_{a_1})
  (j_1j_2\cdots j_{a_2})\cdots(l_1l_2\cdots l_{a_s})$
  with $1\leq a_i, 1 \leq i\leq s$, then define
  \hskip 1.0cm
  \begin{align} \label{1.1}
     X_{\tau}=\left\{
(x_1,\dots,x_k)\in X,
 x_{i_1}=\cdots=x_{i_{a_1}},\cdots, x_{l_1}=\cdots=x_{l_{a_s}}
 \right\}.
\end{align}
  Now we will state our sieve
formula.   We notice that there are many other interesting
corollaries of this formula \cite{LW2, LW3}.

\begin{thm} \label{thm1.0}
Let $f(x_1,x_2,\dots,x_k)$ be any complex valued function defined over
$X$. Then
  \begin{align*}
 \sum_{x \in \overline{X}}f(x_1,x_2,\dots,x_k)=\sum_{\tau\in S_k}{\sign(\tau)\sum_{x \in
X_{\tau} } f(x_1,x_2,\dots,x_k)}.
    \end{align*}
 \end{thm}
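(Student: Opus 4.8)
The plan is to prove the identity by exchanging the order of summation on the right-hand side and then evaluating, for each fixed $x\in X$, the signed count of permutations $\tau$ for which $x\in X_\tau$. First I would rewrite
\[
\sum_{\tau\in S_k}\sign(\tau)\sum_{x\in X_\tau}f(x)=\sum_{x\in X}f(x)\sum_{\substack{\tau\in S_k\\ x\in X_\tau}}\sign(\tau),
\]
which is legitimate since each sum is finite. With this exchange, it suffices to show that the inner coefficient $\sum_{\tau:\,x\in X_\tau}\sign(\tau)$ equals $1$ when $x\in\overline{X}$ and $0$ otherwise; the asserted formula then follows at once, because multiplying $f(x)$ by this indicator selects exactly the distinct-coordinate tuples.

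Next I would set up the parametrization. For a fixed $x=(x_1,\dots,x_k)$, let $P=P(x)$ be the partition of $\{1,\dots,k\}$ whose blocks $B_1,\dots,B_m$ are the maximal index sets on which the coordinates agree, so that $i$ and $j$ lie in a common block if and only if $x_i=x_j$. By the definition of $X_\tau$ in \eqref{1.1}, the requirement $x\in X_\tau$ forces the coordinates indexed by each cycle of $\tau$ to be equal, which happens precisely when every cycle of $\tau$ is contained in a single block of $P$. Equivalently, $\tau$ ranges exactly over the Young subgroup $S_{B_1}\times\cdots\times S_{B_m}\subseteq S_k$. The essential bookkeeping observation is that $\sign(\tau)=(-1)^{k-l(\tau)}$ is nothing but the ordinary signature of $\tau$, and that the cycle count and the cardinality are both additive over the blocks; hence for $\tau=\sigma_1\cdots\sigma_m$ with $\sigma_j\in S_{B_j}$ one has $\sign(\tau)=\prod_{j=1}^m\sign(\sigma_j)$.

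It then remains to carry out the factorization and apply the vanishing of the signed sum. Multiplicativity of the sign gives
\[
\sum_{\substack{\tau\in S_k\\ x\in X_\tau}}\sign(\tau)=\prod_{j=1}^m\Bigl(\sum_{\sigma\in S_{B_j}}\sign(\sigma)\Bigr),
\]
and I would invoke the elementary fact that $\sum_{\sigma\in S_n}\sign(\sigma)$ equals $1$ for $n=1$ and $0$ for $n\ge 2$, the latter because composition with a fixed transposition is a sign-reversing involution that pairs the even with the odd permutations. Therefore the product is zero unless every block $B_j$ is a singleton, i.e. unless all coordinates of $x$ are distinct, that is $x\in\overline{X}$; and in that case each factor is $1$, so the product equals $1$. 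This yields the claimed indicator and closes the argument.

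The only real content, and thus the step I would watch most carefully, is the parametrization: verifying in both directions that $x\in X_\tau$ is equivalent to $\tau$ lying in the Young subgroup attached to $P(x)$, and confirming that the stated sign $(-1)^{k-l(\tau)}$ coincides with the usual signature so that it genuinely factors across blocks. Once these identifications are secured, the remaining cancellation is the standard vanishing of the alternating sum over $S_n$, and everything collapses to the distinct-coordinate indicator.
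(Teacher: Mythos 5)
Your proof is correct and complete. Note that the paper itself states this theorem without proof, citing \cite{LW2} for the original argument and \cite{LW3} for an alternative via the theory of partial orders, so there is no in-paper proof to compare against line by line. Your route --- exchanging the order of summation and showing that the signed count $\sum_{\tau:\,x\in X_\tau}\sign(\tau)$ is the indicator of $\overline{X}$ --- is the natural direct argument, and every step checks out: the equivalence of $x\in X_\tau$ with $\tau$ lying in the Young subgroup $S_{B_1}\times\cdots\times S_{B_m}$ attached to the coincidence partition of $x$ holds because the $\tau$-orbit of each index is exactly its cycle; the sign $(-1)^{k-l(\tau)}$ is the usual signature and factors over blocks since both $k$ and the cycle count are additive; and the vanishing of $\sum_{\sigma\in S_n}\sign(\sigma)$ for $n\ge 2$ finishes it. This is essentially the partition-lattice M\"obius inversion proof of \cite{LW3} in elementary disguise (the signed sum over a Young subgroup computes the relevant M\"obius values), so it is a legitimate self-contained substitute for the cited proofs.
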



Note that in many situations the sum  $\sum_{x \in
X_{\tau} } f(x_1,x_2,\dots,x_k)$ is much easier to compute compared to the left one.

 $S_k$ acts on $\Omega^k$ naturally by
permuting the coordinates. That is, for $\tau\in S_k$ and
$x=(x_1,x_2,\dots,x_k)\in \Omega^k$, $\tau\circ
x=(x_{\tau(1)},x_{\tau(2)},\dots,x_{\tau(k)}).$
  A subset $X$ in $\Omega^k$ is said to be symmetric if for any $x\in X$ and
any $\tau\in S_k$, $\tau\circ x \in X $.
%
%
%
 For $\tau\in S_k$, denote by $\overline{\tau}$
 the conjugacy class represented by $\tau$ and sometimes it is more convenient to view it
 as the set of permutations conjugate to $\tau$.
Conversely, for a conjugacy class $\overline{\tau}\in C_k$, just
let $\tau$ denote a representative permutation in this class. 

In particular, since two permutations in $S_k$ are conjugate if and only if they
have the same type of cycle structure,  if  $X$ is symmetric and $f$ is a symmetric function
under the action of $S_k$,  then we have the following simpler formula.
\begin{prop} \label{thm1.1} Let $C_k$ be the set of conjugacy  classes
 of $S_k$.  If $X$ is symmetric and $f$ is symmetric, then
 \begin{align}\label{7} \sum_{x \in \overline{X}}f(x_1,x_2,\dots,x_k)=\sum_{\overline{\tau} \in C_k}\sign(\tau) C(\tau)
 \sum_{x \in X_{\tau} } f(x_1,x_2,\dots, x_k),
  \end{align} where $C(\tau)$ is the number of permutations conjugate to
  $\tau$.
\end{prop}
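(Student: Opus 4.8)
The plan is to deduce this symmetric version directly from the general sieving formula of Theorem~\ref{thm1.0} by grouping its right-hand sum over $S_k$ into conjugacy classes. Two ingredients make this grouping collapse cleanly. First, since $\sign(\tau)=(-1)^{k-l(\tau)}$ depends only on the number $l(\tau)$ of cycles, and conjugate permutations share the same cycle type, the sign is constant on each class $\overline{\tau}$. Second, and this is the substantive point, the inner sum $\sum_{x\in X_\tau}f(x)$ is also constant on each conjugacy class, so each of the $C(\tau)$ permutations in the class contributes the same term.

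To establish this constancy, I would fix conjugate permutations $\sigma=\pi\tau\pi^{-1}$ and exhibit a value-preserving bijection between $X_\tau$ and $X_\sigma$. The natural candidate is the coordinate action itself: define $\phi\colon X_\tau\to\Omega^k$ by $\phi(x)=\pi^{-1}\circ x$, i.e. $(\phi(x))_m=x_{\pi^{-1}(m)}$. The key observation is that $i$ and $j$ lie in a common cycle of $\sigma$ if and only if $\pi^{-1}(i)$ and $\pi^{-1}(j)$ lie in a common cycle of $\tau$, which is exactly what conjugation by $\pi$ does to cycle structure. Hence if $x\in X_\tau$ satisfies $x_a=x_b$ whenever $a,b$ share a $\tau$-cycle, then $(\phi(x))_i=x_{\pi^{-1}(i)}=x_{\pi^{-1}(j)}=(\phi(x))_j$ whenever $i,j$ share a $\sigma$-cycle, so $\phi(x)\in X_\sigma$; symmetry of $X$ guarantees $\phi(x)$ actually lies in $X$, and $\phi$ is invertible with inverse $z\mapsto\pi\circ z$. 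Because $f$ is symmetric, $f(\phi(x))=f(x)$, whence
$$\sum_{z\in X_\sigma}f(z)=\sum_{x\in X_\tau}f(\phi(x))=\sum_{x\in X_\tau}f(x).$$

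With both the sign and the inner sum shown to be class functions, I would finish by partitioning $S_k=\bigsqcup_{\overline{\tau}\in C_k}\overline{\tau}$ in Theorem~\ref{thm1.0}:
$$\sum_{x\in\overline{X}}f(x)=\sum_{\tau\in S_k}\sign(\tau)\sum_{x\in X_\tau}f(x)=\sum_{\overline{\tau}\in C_k}\,\sum_{\sigma\in\overline{\tau}}\sign(\sigma)\sum_{x\in X_\sigma}f(x),$$
and then replacing each $\sigma\in\overline{\tau}$ by the common representative $\tau$ contributes a factor $C(\tau)=|\overline{\tau}|$, yielding \eqref{7}. The main obstacle is the bookkeeping in the bijection step: one must track carefully how conjugation by $\pi$ relabels the cycles and confirm that the resulting coordinate rearrangement both respects the defining equalities of $X_\sigma$ and stays inside the symmetric set $X$. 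Once the correspondence $\sigma=\pi\tau\pi^{-1}\leftrightarrow\phi=\pi^{-1}\circ(\cdot)$ is pinned down, the remaining verifications are routine.
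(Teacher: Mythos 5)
Your proposal is correct and follows exactly the route the paper intends: the paper states this proposition as an immediate consequence of Theorem~\ref{thm1.0} by grouping the sum over $S_k$ into conjugacy classes, and you have simply supplied the (routine but worth recording) verification that both $\sign(\tau)$ and $\sum_{x\in X_\tau}f(x)$ are class functions, the latter via the value-preserving bijection $x\mapsto\pi^{-1}\circ x$ between $X_\tau$ and $X_{\pi\tau\pi^{-1}}$. No gaps; the argument matches the paper's (unwritten) proof.
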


%

%
%
%
%
%

\begin{lem}\label{lem2.4} We have the following inequality for the coefficients of rational functions. For positive integers $m$ and $n$,
$$[x^k]\frac {1} { (1-x^m)^n}\leq [x^k] \frac {1} { (1-x)^n}.$$
\end{lem}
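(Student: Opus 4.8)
The plan is to prove the inequality by computing both coefficients in closed form and comparing them directly, exploiting the fact that both power series have nonnegative coefficients. Recall the standard generating-function identity $\frac{1}{(1-y)^n}=\sum_{j\geq 0}{j+n-1 \choose n-1}y^j$. Taking $y=x$ gives $[x^k]\frac{1}{(1-x)^n}={k+n-1 \choose n-1}$, the right-hand side of the claimed bound. Substituting $y=x^m$ instead gives $\frac{1}{(1-x^m)^n}=\sum_{j\geq 0}{j+n-1 \choose n-1}x^{mj}$, so that $[x^k]\frac{1}{(1-x^m)^n}$ equals ${j+n-1 \choose n-1}$ when $k=mj$ for some nonnegative integer $j$, and equals $0$ otherwise.

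With these two formulas in hand the comparison splits into two cases. If $m\nmid k$, then the left-hand coefficient is $0$, which is trivially at most the nonnegative quantity ${k+n-1 \choose n-1}$. If $m\mid k$, write $k=mj$; the desired inequality becomes ${j+n-1 \choose n-1}\leq {mj+n-1 \choose n-1}$. Since $m\geq 1$ forces $j\leq mj$, and since the map $t\mapsto {t+n-1 \choose n-1}=\frac{(t+1)(t+2)\cdots(t+n-1)}{(n-1)!}$ is non-decreasing on the nonnegative integers (each of its $n-1$ factors is non-decreasing in $t$), the inequality follows at once.

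The argument is essentially a routine computation, so I do not expect a serious obstacle; the only points requiring care are remembering to dispatch the non-divisible case $m\nmid k$ separately, where the left-hand coefficient vanishes, and invoking the monotonicity of ${t+n-1 \choose n-1}$ in $t$. An alternative, superficially slicker route would be to factor $1-x^m=(1-x)(1+x+\cdots+x^{m-1})$ and try to show that $\frac{1}{(1-x)^n}-\frac{1}{(1-x^m)^n}$ has nonnegative coefficients, but the sign pattern of the expansion of $\frac{1}{(1+x+\cdots+x^{m-1})^n}$ makes that approach less transparent, so I would favor the direct term-by-term comparison described above.
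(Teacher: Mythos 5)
Your proof is correct and follows essentially the same route as the paper: expand both series via the binomial identity, note the left-hand coefficient is ${\lfloor k/m\rfloor+n-1 \choose n-1}$ or zero, and conclude by monotonicity of $t\mapsto{t+n-1\choose n-1}$. The paper merely compresses your two cases into the single bound $[x^k]\frac{1}{(1-x^m)^n}\leq{\lfloor k/m\rfloor+n-1\choose \lfloor k/m\rfloor}\leq{k+n-1\choose k}$.
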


\begin{proof}
Since  $$\frac 1 {(1-x)^n}=\sum_{k=0}^\infty {k+n-1 \choose k} x^k,$$ we have
$$[x^k]\frac 1 {(1-x^m)^n}\leq {[k/m]+n-1 \choose [k/m]} \leq {k+n-1 \choose k}=[x^k]\frac 1 {(1-x)^n}.$$
\end{proof}

\begin{lem}\label{lem2.5}
If for all integers $k\geq 0$,  we have
$$[x^k]f_1(x)\leq[x^k]g_1(x), \ [x^k]f_2(x)\leq[x^k]g_2(x),$$
then for all integers $k\geq 0$,
$$[x^k]f_1(x)f_2(x)\leq[x^k]g_1(x)g_2(x). $$
\end{lem}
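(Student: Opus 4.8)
The plan is to expand each coefficient as a Cauchy product and then compare the two expansions term by term. Setting $a_i=[x^i]f_1$, $A_i=[x^i]g_1$, $b_j=[x^j]f_2$, and $B_j=[x^j]g_2$, the multiplication rule for power series gives
$$[x^k]f_1(x)f_2(x)=\sum_{i+j=k}a_ib_j,\qquad [x^k]g_1(x)g_2(x)=\sum_{i+j=k}A_iB_j,$$
where in both sums $i$ and $j$ range over nonnegative integers with $i+j=k$. The target inequality is therefore reduced to comparing these two finite sums.

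First I would isolate the elementary numerical fact that $0\le a\le A$ and $0\le b\le B$ together imply $ab\le AB$. Applying it to each admissible pair $(i,j)$ turns the hypotheses $a_i\le A_i$ and $b_j\le B_j$ into $a_ib_j\le A_iB_j$, and summing these over all pairs with $i+j=k$ yields exactly $\sum_{i+j=k}a_ib_j\le\sum_{i+j=k}A_iB_j$, which is the assertion.

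The one genuine point to watch — the \emph{hard part}, such as it is — is that the numerical fact above needs the coefficients $a_i$ and $b_j$ to be nonnegative; without this the statement is false (for instance $a_i=b_j=-10\le 1=A_i=B_j$ gives $a_ib_j=100>1=A_iB_j$). Thus the argument rests on the implicit but harmless hypothesis that the dominated series $f_1$ and $f_2$ have nonnegative coefficients, which holds in every application here, since the series in play (such as the $\frac{1}{(1-x)^n}$ appearing in Lemma \ref{lem2.4}) manifestly do. I would state this nonnegativity condition explicitly at the outset so that the convolution comparison goes through cleanly.
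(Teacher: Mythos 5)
Your proof is correct and is essentially identical to the paper's: both expand $[x^k]f_1f_2$ as the Cauchy convolution $\sum_{i=0}^k [x^i]f_1\cdot[x^{k-i}]f_2$ and compare termwise with the corresponding sum for $g_1g_2$. Your added observation that the termwise comparison requires nonnegativity of the dominated coefficients is a genuine (if minor) point that the paper's proof leaves implicit; it is harmless here since all series in the applications have nonnegative coefficients.
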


\begin{proof}
$$[x^k]f_1(x)f_2(x)=\sum_{i=0}^k[x^i] f_1(x)\cdot[x^{k-i}]f_2(x)\leq \sum_{i=0}^k[x^i]g_1(x)[x^{k-i}]g_2(x)=
[x^k]g_1(x)g_2(x).$$
\end{proof}

%
%

\begin{lem}\label{lem2.3} If $a, b$ are integers and $0\leq b\leq a$, then we have the inequality on the coefficients for rational functions.
$$[x^k]\frac {(1-x^{pq})^b}  { (1-x^p)^{a} }\leq [x^k] \frac {1} { (1-x^p)^{a}}.$$
\end{lem}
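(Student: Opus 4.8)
The plan is to exploit the elementary telescoping factorization
$$\frac{1-x^{pq}}{1-x^p}=1+x^p+x^{2p}+\cdots+x^{(q-1)p}=\sum_{j=0}^{q-1}x^{jp},$$
which is a polynomial with \emph{nonnegative} coefficients. Since the hypothesis $0\leq b\leq a$ lets me peel off exactly $b$ copies of the denominator, I would first rewrite the left-hand side as
$$\frac{(1-x^{pq})^b}{(1-x^p)^a}=\left(\frac{1-x^{pq}}{1-x^p}\right)^b\cdot\frac{1}{(1-x^p)^{a-b}}=\left(\sum_{j=0}^{q-1}x^{jp}\right)^b\cdot\frac{1}{(1-x^p)^{a-b}}.$$
Both factors on the right are power series with nonnegative coefficients, so the whole expression is presented as a product in a form amenable to Lemma~\ref{lem2.5}.

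Next I would write the target in the matching factored shape, namely $\frac{1}{(1-x^p)^a}=\frac{1}{(1-x^p)^b}\cdot\frac{1}{(1-x^p)^{a-b}}$. Since the second factor $\frac{1}{(1-x^p)^{a-b}}$ is identical on both sides, Lemma~\ref{lem2.5} reduces the whole inequality to the single coefficientwise comparison
$$[x^k]\left(\sum_{j=0}^{q-1}x^{jp}\right)^b\leq[x^k]\frac{1}{(1-x^p)^b}\qquad(k\geq0).$$
This last bound holds because $\sum_{j=0}^{q-1}x^{jp}$ is simply a truncation of the full geometric series $\frac{1}{1-x^p}=\sum_{j=0}^{\infty}x^{jp}$: every coefficient of the truncation is at most the corresponding coefficient of $\frac{1}{1-x^p}$. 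Applying Lemma~\ref{lem2.5} to the $b$-fold product of this termwise domination then yields the comparison of the $b$-th powers.

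The one point that needs care is nonnegativity: Lemma~\ref{lem2.5} only propagates the coefficient inequality through a product when the series involved have nonnegative coefficients, since a negative coefficient would reverse a term in the convolution. This is exactly why I split off the factor $\left(\frac{1-x^{pq}}{1-x^p}\right)^b$ rather than working with $(1-x^{pq})^b$ directly—the former is manifestly nonnegative while the latter is not. All the pieces appearing above (the truncated polynomial $\sum_{j=0}^{q-1}x^{jp}$, its powers, and $\frac{1}{(1-x^p)^{a-b}}$) have nonnegative coefficients, so the hypotheses of Lemma~\ref{lem2.5} are met. The degenerate cases $b=0$ (where the inequality is an equality) and $b=a$ (where the leftover factor $\frac{1}{(1-x^p)^{a-b}}$ equals $1$) are covered by the same argument, so no separate treatment is required.
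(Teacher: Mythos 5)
Your proof is correct and follows essentially the same route as the paper: factor off $\left(\frac{1-x^{pq}}{1-x^p}\right)^b$ as the truncated geometric polynomial, dominate it coefficientwise by the full series $\frac{1}{(1-x^p)^b}$ (Lemma~\ref{lem2.4}), and propagate through the product with $\frac{1}{(1-x^p)^{a-b}}$ via Lemma~\ref{lem2.5}. Your explicit remark that the multiplicativity step requires nonnegative coefficients is a worthwhile clarification, since Lemma~\ref{lem2.5} as stated tacitly assumes it.
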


\begin{proof}
Applying Lemma \ref{lem2.4}  and Lemma \ref{lem2.5}, we have
 \begin{align*}
[x^k]\frac {(1-x^{pq})^b}{ (1-x^p)^{a} }&=[x^k] (\frac {1-x^{pq}}{1-x^p})^b \cdot \frac {1}{ (1-x^p)^{a-b} }\\
&=[x^k] (1+x^p+\cdots+x^{(q-1)p})^b\cdot  \frac {1} { (1-x^p)^{a-b}   }\\
&\leq[x^k] (1+x^p+\cdots+x^{(q-1)p}+\cdots)^b\cdot \frac {1} { (1-x^p)^{a-b}   }\\
&= [x^k] \frac {1} { (1-x^p)^a}.
\end{align*}
\end{proof}

We now establish a combinatorial upper bound  which is crucial for the proof of our main results.
  A permutation $\tau\in S_k$ is said to be of type
$(c_1,c_2,\cdots,c_k)$ if $\tau$ has exactly $c_i$ cycles of length
$i$.  Note that $\sum_{i=1}^k ic_i=k$. Let $N(c_1,c_2,\dots,c_k)$ be
the number of permutations in $S_k$ of type $(c_1,c_2,\dots,c_k)$.
It is well known  that
$$N(c_1,c_2,\dots,c_k)=\frac {k!} {1^{c_1}c_1! 2^{c_2}c_2!\cdots k^{c_k}c_k!},$$
and we then define the generating function
\begin{align*}C_k(t_1,t_2,\cdots,t_k)= \sum_{\sum
ic_i=k} N(c_1,c_2,\cdots,c_k)t_1^{c_1}t_2^{c_2}\cdots t_k^{c_k}.
 \end{align*}

\begin{lem} \label{lem6.3}
Let $q\geq s$ be two positive real numbers.
 If $t_i=q$ for $(i, d)>1$ and $t_i=s$ for $(i, d)=1$, then we have the bound
\begin{align*}
C_k(\overbrace{s,\cdots,s}^{(i,d)=1},q,\overbrace{s,\cdots,s}^{(i,d)=1},q,
\cdots) &=\sum_{\sum
ic_i=k} N(c_1,c_2,\cdots,c_k)s^{c_1}s^{c_2}\cdots q^{c_d}s^{c_{d+1}}\cdots \\
&\leq ( s+(q-s)(\sum_{i\mid d, \mu(i)=-1}\frac {1}{i})+k-1)_k.
 \end{align*}
\end{lem}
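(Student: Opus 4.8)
The plan is to pass to the exponential generating function of the cycle index and reduce the claimed falling‑factorial bound to a coefficientwise comparison of two power series, and then to bound that comparison with Lemmas~\ref{lem2.4} and~\ref{lem2.5}. Write $a=q-s\ge 0$, let $x$ be a formal variable, and set $L(x)=\sum_{i\ge 1}x^i/i=-\log(1-x)$ and $H(x)=\sum_{(i,d)>1}x^i/i$. By the exponential formula for the cycle index of $S_k$ one has $\sum_{k\ge 0}\frac{C_k}{k!}x^k=\exp(\sum_{i\ge 1}\frac{t_i}{i}x^i)$; using $t_i=s$ for $(i,d)=1$ and $t_i=s+a$ for $(i,d)>1$ gives $\sum_{i\ge 1}\frac{t_i}{i}x^i=sL(x)+aH(x)$, so the generating series equals $(1-x)^{-s}\exp(aH(x))$. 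Since $(s+(q-s)\delta+k-1)_k=k!\,[x^k](1-x)^{-(s+a\delta)}$ and $C_k=k!\,[x^k]((1-x)^{-s}\exp(aH(x)))$, the lemma is exactly the coefficientwise inequality $[x^k]((1-x)^{-s}\exp(aH(x)))\le [x^k](1-x)^{-(s+a\delta)}$ for all $k$.

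First I would discard the inclusion–exclusion overcount hidden in $\delta$. For each prime $p\mid d$ we have $-\tfrac1p\log(1-x^p)=\sum_{p\mid i}x^i/i$, and since every $i$ with $(i,d)>1$ is divisible by at least one such $p$, $H(x)\le\sum_{p\mid d}(-\tfrac1p\log(1-x^p))$ coefficientwise. Because $\exp$ has nonnegative Taylor coefficients, induction through Lemma~\ref{lem2.5} shows coefficientwise domination is preserved under $\exp$ (apply Lemma~\ref{lem2.5} to each $P^j$ versus $R^j$ and sum with the weights $1/j!$), so $\exp(aH(x))\le\prod_{p\mid d}(1-x^p)^{-a/p}$ coefficientwise; multiplying by $(1-x)^{-s}$ via Lemma~\ref{lem2.5} reduces the goal to bounding $[x^k]((1-x)^{-s}\prod_{p\mid d}(1-x^p)^{-a/p})$. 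As $\delta=\sum_{m\mid d,\ \mu(m)=-1}1/m\ge\sum_{p\mid d}1/p$ and $(1-x)^{-y}$ is coefficientwise nondecreasing in $y\ge 0$, it then suffices to prove $(1-x)^{-s}\prod_{p\mid d}(1-x^p)^{-a/p}\le(1-x)^{-s-a\sum_{p\mid d}1/p}$. (Had I instead expanded $\exp(aH)$ by full Möbius inversion, the $\mu(m)=+1$ numerator factors would appear and Lemma~\ref{lem2.3} would be the relevant tool; the exp‑domination bound sidesteps them entirely.)

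The core step is a buffered, real‑exponent refinement of Lemma~\ref{lem2.4}: for real $s'\ge 1$, integer $m\ge 2$, and real $\beta\ge 0$, one has $[x^k]((1-x)^{-s'}(1-x^m)^{-\beta})\le[x^k](1-x)^{-s'-\beta}$. I would prove this by convolution: the left side equals $\sum_{mn\le k}\binom{\beta+n-1}{n}\binom{s'+k-mn-1}{k-mn}$ and the right side equals $\sum_{n\le k}\binom{\beta+n-1}{n}\binom{s'+k-n-1}{k-n}$; comparing the two sums term by term in $n$, it remains only to observe that $\binom{s'+J-1}{J}$ is nondecreasing in $J$ precisely because $s'\ge 1$ (the consecutive ratio is $(s'+J-1)/J\ge 1$) and that $k-mn\le k-n$. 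With this in hand I would convert the primes $p\mid d$ one at a time: at the $j$‑th stage the running factor is $(1-x)^{-s_j}(1-x^{p_j})^{-a/p_j}R_j$ with $s_j=s+a\sum_{l<j}1/p_l\ge s\ge 1$ and $R_j=\prod_{l>j}(1-x^{p_l})^{-a/p_l}$ of nonnegative coefficients; applying the buffered bound to the first two factors and Lemma~\ref{lem2.5} to reattach $R_j$ replaces $(1-x^{p_j})^{-a/p_j}$ by $(1-x)^{-a/p_j}$. After all primes are processed the product collapses to $(1-x)^{-s-a\sum_{p\mid d}1/p}$, closing the chain.

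The main obstacle is precisely the passage from integer to real exponents. The exponents $a/p$ manufactured by the cycle index are essentially never integers, so Lemma~\ref{lem2.4} does not apply directly; moreover the naive factorization that would bound each $(1-x^p)^{-a/p}$ in isolation is simply false when the fractional exponent is small, so the factor $(1-x)^{-s}$ cannot be peeled off and must be carried through as a buffer. The buffered inequality of the previous paragraph is what resolves this, and it genuinely needs the denominator buffer to have exponent at least $1$; this is where I rely on $s\ge 1$, and indeed the stated bound can fail for very small $s$ (for instance $d=2$, $k=2$, $s=0.3$, $q=0.6$ gives $C_2=s^2+q=0.69$ while the right‑hand side is $0.6525$). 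In every application in this paper $s$ is at least $(d-1)\sqrt{q}+c\ge 1$, so the hypothesis is automatically satisfied and the argument goes through.
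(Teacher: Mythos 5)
Your argument is correct, but it takes a genuinely different route from the paper's, and the comparison is illuminating: it exposes a real gap in the paper's own proof and shows that the lemma as printed is false without an extra hypothesis. The paper starts from the same exponential formula, but then expands $\sum_{(i,d)>1}u^i/i$ by M\"obius inversion into the product $\frac{(1-u^{p_1p_2})^{(q-s)/p_1p_2}\cdots}{(1-u)^{s}(1-u^{p_1})^{(q-s)/p_1}(1-u^{p_2})^{(q-s)/p_2}\cdots}$, discards the numerator factors by Lemma~\ref{lem2.3}, and replaces each $(1-u^{p})^{-(q-s)/p}$ by $(1-u)^{-(q-s)/p}$ by Lemma~\ref{lem2.4}. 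Both of those lemmas are stated and proved only for integer exponents, and, as you observe, the key inequality $[x^k](1-x^m)^{-\beta}\le[x^k](1-x)^{-\beta}$ fails for $0<\beta<1$ (for instance $m=k=2$, $\beta=1/2$ gives $1/2>3/8$). Your numerical check ($d=2$, $k=2$, $s=0.3$, $q=0.6$: $C_2=s^2+q=0.69$ versus $(1.45)_2=0.6525$) shows this is not merely a defect of the proof --- the stated bound itself fails for small $s$, so a hypothesis such as your $s\ge 1$ is genuinely needed. Your repair is sound: bounding $e^{aH}$ by $\prod_{p\mid d}(1-x^p)^{-a/p}$ using only the subadditive overcount (which sidesteps the M\"obius numerators and Lemma~\ref{lem2.3} entirely), and then absorbing each factor $(1-x^p)^{-a/p}$ into the buffer $(1-x)^{-s'}$ with $s'\ge 1$ via the term-by-term convolution comparison; the monotonicity of ${s'+J-1 \choose J}$ in $J$ for $s'\ge 1$ is exactly the right replacement for the integrality that Lemma~\ref{lem2.4} relies on. As a bonus, your route yields the sharper constant $\sum_{p\mid d}\frac1p$ in place of $\delta=\sum_{i\mid d,\,\mu(i)=-1}\frac1i$. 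One caveat on scope: in the paper's applications of this lemma one has $s=C_d n h^{-1/d}+c$ (Section 4) or $s=c$ (Section 6), so $s\ge 1$ holds in the first case and in the second whenever $c\ge 1$, while the boundary case $c=0$ is covered by the exact formula of Section 7; thus the main theorems survive, but the lemma's hypotheses should be amended.
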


\begin{proof}
Suppose $d$ has the  prime factorization $d=\prod_{j=1}^t p_j^{s_j}$.
By the definition of the exponential generating function, we have
$$\sum_{k\geq 0}C_k(t_1,t_2,\cdots,t_k)\frac {u^k}{k!}=e^{ut_1+u^2\cdot\frac{t_2}2+
u^3\cdot\frac {t_3}3+\cdots}.$$
By the conditions $t_i=q$ for $(i, d)>1$ and $t_i=s$ for $(i, d)=1$, we deduce
\begin{eqnarray*} C_k(\overbrace{s,\cdots,s}^{(i, d)=1},q,\overbrace{s,\cdots,s}^{(i, d)=1},q,
\cdots)&=&\left[\frac {u^k}{k!}\right]e^{us+u^2\cdot\frac s
2+\cdots+u^{d-1}\cdot\frac {s} {d-1}+u^d\cdot\frac {q} d+u^{d+1}\cdot\frac {s}{d+1} \cdots}\\
&=&\left[\frac {u^k}{k!}\right]e^{s\sum_{i}\frac {u^i}{i}+(q-s)\sum_{(i, d)>1}\frac {u^i}{i}}.
\end{eqnarray*}
Using the inclusion-exclusion, the above expression can be re-written as

\begin{eqnarray*}
 &&\left[\frac {u^k}{k!}\right]e^{s\sum_{i}\frac {u^i}{i}+(q-s)\left(\sum_{p_1\mid i}\frac {u^i}{i}+\sum_{p_2\mid i}\frac
 {u^i}{i}+ \cdots-\sum_{p_1p_2\mid i}\frac {u^i}{i}- \cdots\right)}\\
&=&\left[\frac {u^k}{k!}\right]e^{-s\log{\left(1-u\right)}-\frac{q-s}{p_1}\log(1-u^{p_1})-
\frac{q-s}{p_2}\log(1-u^{p_2})-\cdots+\frac{q-s}{p_1p_2}\log(1-u^{p_1p_2})+\cdots}\\
&=&\left[\frac {u^k}{k!}\right]\frac {(1-u^{p_1p_2})^{\frac {q-s} {p_1p_2}}\cdots} {(1-u)^{s}(1-u^{p_1})^{\frac {q-s}
{p_1}}(1-u^{p_2})^{\frac {q-s} {p_2}}\cdots}\\
&\leq& \left[\frac {u^k}{k!}\right]\frac {1} {(1-u)^s (1-u^{p_1})^{\frac {q-s}
{p_1}}(1-u^{p_2})^{\frac {q-s} {p_2}}\cdots (1-u^{p_1p_2p_3})^{\frac {q-s}
{p_1p_2p_3}}\cdots}\\
&\leq& \left[\frac {u^k}{k!}\right]\frac {1} {(1-u)^s (1-u)^{\frac {q-s}
{p_1}}(1-u)^{\frac {q-s} {p_2}}\cdots (1-u)^{\frac {q-s}
{p_1p_2p_3}}\cdots}\\
&=&{k!} {  s +(q-s)(\sum_{i\mid d, \mu(i)=-1}\frac {1}{i})+k-1\choose k}\\
&=&( s+(q-s)(\sum_{i\mid d, \mu(i)=-1}\frac {1}{i})+k-1)_k.
\end{eqnarray*}
In the  above inequality step, we used Lemma \ref{lem2.3} and Lemma \ref{lem2.4}.
\end{proof}

In the same spirit, a simpler special case is
the following lemma and the proof is omitted.

\begin{lem} \label{lem6.4}
Let $q\geq s$ be two non negative real numbers.
 If $t_i=q$ for $ d\mid i$ and $t_i=s$ for $d\nmid i$, then we have
\begin{align*}
C_k(\overbrace{s,\cdots,s}^{d \nmid i },q,\overbrace{s,\cdots,s}^{d \nmid i},q,
\cdots)&=\sum_{\sum
ic_i=k} N(c_1,c_2,\cdots,c_k)s^{c_1}s^{c_2}\cdots q^{c_d}s^{c_{d+1}}\cdots \\
&=\left[\frac {u^k}{k!}\right]\frac {1} {(1-u)^s (1-u^d)^{\frac {q-s} {d}}}.\\
&\leq\left[\frac {u^k}{k!}\right]\frac {1} {(1-u)^s (1-u)^{\frac {q-s} {d}}}\\
&=(s+(q-s)/d+k-1)_k.
  \end{align*}
 \end{lem}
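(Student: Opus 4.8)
The plan is to follow the exact template established in the proof of Lemma~\ref{lem6.3}, but with the simpler divisibility condition $d\mid i$ in place of $(i,d)>1$. First I would write down the exponential generating function for the cycle-type counts,
$$\sum_{k\geq 0}C_k(t_1,\dots,t_k)\frac{u^k}{k!}=\exp\Bigl(\sum_{i\geq 1}t_i\frac{u^i}{i}\Bigr),$$
which is standard and was already invoked above. Substituting the specialization $t_i=s$ when $d\nmid i$ and $t_i=q$ when $d\mid i$, the exponent splits as $s\sum_{i\geq 1}\frac{u^i}{i}+(q-s)\sum_{d\mid i}\frac{u^i}{i}$. The crucial simplification here, and the reason this case is easier than Lemma~\ref{lem6.3}, is that there is no inclusion-exclusion to perform: the set $\{i:d\mid i\}$ is already a single arithmetic progression, so $\sum_{d\mid i}\frac{u^i}{i}=\sum_{j\geq 1}\frac{u^{dj}}{dj}=-\frac{1}{d}\log(1-u^d)$, and likewise $\sum_{i\geq 1}\frac{u^i}{i}=-\log(1-u)$.

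Exponentiating then gives directly
$$C_k(\overbrace{s,\dots,s}^{d\nmid i},q,\dots)=\left[\frac{u^k}{k!}\right]\frac{1}{(1-u)^s(1-u^d)^{\frac{q-s}{d}}},$$
which is the first displayed equality in the statement. Next I would bound the coefficient by replacing $(1-u^d)^{\frac{q-s}{d}}$ in the denominator with $(1-u)^{\frac{q-s}{d}}$. This is exactly where Lemma~\ref{lem2.4} applies: since $q\geq s$ the exponent $\frac{q-s}{d}$ is nonnegative, and that lemma (combined with Lemma~\ref{lem2.5} to handle the product against the $(1-u)^{-s}$ factor) guarantees $[u^k]\frac{1}{(1-u^d)^{(q-s)/d}}\leq[u^k]\frac{1}{(1-u)^{(q-s)/d}}$ coefficient-wise, yielding the inequality in the statement. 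Finally, collapsing the product of the two powers of $(1-u)$ gives $(1-u)^{-(s+(q-s)/d)}$, whose $u^k$-coefficient is $\binom{s+(q-s)/d+k-1}{k}$; multiplying back by the $k!$ from the $\frac{u^k}{k!}$ operator converts this binomial coefficient into the falling factorial $(s+(q-s)/d+k-1)_k$, which is the claimed right-hand side.

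I expect no serious obstacle here, and indeed the authors themselves mark the proof as omitted precisely because it is a strictly simpler instance of Lemma~\ref{lem6.3}. The only point requiring minor care is the nonnegativity of the exponent $\frac{q-s}{d}$, needed so that $(1-u^d)^{\frac{q-s}{d}}$ has a well-defined power-series expansion with the sign pattern that makes Lemma~\ref{lem2.4} applicable; this is guaranteed by the hypothesis $q\geq s\geq 0$. Everything else is a direct transcription of the logarithm-and-exponentiate computation from the previous lemma, with the inclusion-exclusion sum over squarefree divisors of $d$ degenerating to the single term coming from the progression $d\mid i$.
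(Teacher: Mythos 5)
Your argument is correct and is exactly the proof the authors intend: they omit it precisely because it is the degenerate, single-term case of the inclusion--exclusion computation in Lemma~\ref{lem6.3}, and your exponential-generating-function substitution, the identity $\sum_{d\mid i}u^i/i=-\frac{1}{d}\log(1-u^d)$, and the appeal to Lemmas~\ref{lem2.4} and~\ref{lem2.5} reproduce that template faithfully. The one caveat (shared with the paper's own proof of Lemma~\ref{lem6.3}, so not a defect of your write-up relative to theirs) is that Lemma~\ref{lem2.4} is stated only for positive integer exponents, whereas here the exponent $\frac{q-s}{d}$ is a nonnegative real; the coefficient comparison does extend when this exponent is at least $1$, which is the regime in which the lemma is actually applied.
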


\section{General Case $R$}

 Let $\psi$ denote an additive character from $G=(R,+)$,
 the additive group of $R$, to the group of all nonzero complex numbers $\mathbb{C}^*$.
Let  $\psi_0$ be the
principal character sending each element in $G$ to 1. Denote by
$\hat{G}$ the group of additive characters of $G$, which is isomorphic to $G$.

\begin{lem}\label{lem3.1}  Suppose that $|R|=q$ and $D\subseteq R$ with $|D|=m$. For a fixed
polynomial $f(x)\in R[x]$, let $N_f(D, k, b)$  be the number of
$k$-subsets $S\subseteq D$ such that $\sum_{x\in S}f(x)=b$. Then
  \begin{align*}
 k!N_f(D, k, b)&=\frac{1}{q} {(m)_k}+\frac{1}{q} \sum_{\psi\ne \psi_0}
 \psi^{-1}(b) \sum_{\tau\in
C_{k}}\sign(\tau)C(\tau) F_{\tau}(\psi),
  \end{align*}
  where  $C_{k}$ is the set of all conjugacy classes
 of $S_{k}$ and $C(\tau)$ counts the number of permutations conjugate to
 $\tau$, and
   \begin{align*}
F_{\tau}(\psi)=\prod_{i=1}^{k}(\sum_{a\in D}\psi^i(f(a)))^{c_i}.
\end{align*}

\end{lem}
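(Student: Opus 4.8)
The plan is to translate the subset count into an ordered, distinct-coordinate count on $D^k$ and then apply the symmetric sieving formula of Proposition \ref{thm1.1}. Since an unordered $k$-subset of distinct elements corresponds to exactly $k!$ ordered tuples, we have
$$k! N_f(D,k,b)=\#\{(x_1,\dots,x_k)\in D^k : x_i\ne x_j \text{ for } i\ne j,\ \textstyle\sum_{i=1}^k f(x_i)=b\}.$$
Taking $\Omega=D$ and $X=D^k$, which is symmetric under $S_k$, this is precisely $\sum_{x\in\overline X}F(x)$ with the symmetric weight
$$F(x_1,\dots,x_k)=\mathbbm{1}\Big[\textstyle\sum_{i=1}^k f(x_i)=b\Big].$$

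First I would express $F$ as a character sum using orthogonality of the additive characters of $G=(R,+)$:
$$F(x_1,\dots,x_k)=\frac{1}{q}\sum_{\psi\in\hat G}\psi\Big(\sum_{i=1}^k f(x_i)-b\Big)=\frac1q\sum_{\psi\in\hat G}\psi^{-1}(b)\prod_{i=1}^k\psi(f(x_i)).$$
This $F$ is visibly symmetric in $(x_1,\dots,x_k)$, so Proposition \ref{thm1.1} applies and yields
$$k! N_f(D,k,b)=\sum_{\overline\tau\in C_k}\sign(\tau)\,C(\tau)\sum_{x\in X_\tau}F(x).$$

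Next I would evaluate the inner sum $\sum_{x\in X_\tau}F(x)$. By the definition \eqref{1.1}, $X_\tau$ consists of those tuples that are constant on each cycle of $\tau$; if $\tau$ has a cycle of length $i$ carrying the common value $a\in D$, the factors of $\prod_j\psi(f(x_j))$ coming from that cycle multiply to $\psi(f(a))^i=\psi^i(f(a))$. Summing the free variable of each cycle independently over $D$ and grouping the cycles by length shows
$$\sum_{x\in X_\tau}\prod_{i=1}^k\psi(f(x_i))=\prod_{i=1}^k\Big(\sum_{a\in D}\psi^i(f(a))\Big)^{c_i}=F_\tau(\psi),$$
where $(c_1,\dots,c_k)$ is the cycle type of $\tau$. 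Hence $\sum_{x\in X_\tau}F(x)=\frac1q\sum_\psi\psi^{-1}(b)F_\tau(\psi)$, and interchanging the two finite sums puts the identity in the desired shape once the principal character is split off.

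The step I expect to be the crux is isolating the main term coming from $\psi=\psi_0$. For the principal character $\psi_0^{-1}(b)=1$ and $\sum_{a\in D}\psi_0^i(f(a))=m$, so $F_\tau(\psi_0)=m^{l(\tau)}$ with $l(\tau)=\sum_i c_i$ the number of cycles. Thus the $\psi_0$-contribution is $\frac1q\sum_{\overline\tau\in C_k}\sign(\tau)C(\tau)m^{l(\tau)}=\frac1q\sum_{\tau\in S_k}\sign(\tau)m^{l(\tau)}$, since weighting a conjugacy class by its size $C(\tau)$ recovers the sum over all of $S_k$. I would finish by invoking the classical permutation identity
$$\sum_{\tau\in S_k}\sign(\tau)\,x^{l(\tau)}=(x)_k,$$
which follows from $\sum_{\tau\in S_k}y^{l(\tau)}=y(y+1)\cdots(y+k-1)$ after the substitution $y=-x$ together with $\sign(\tau)=(-1)^{k-l(\tau)}$. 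Specializing $x=m$ identifies this term as $\frac1q(m)_k$, and collecting the remaining $\psi\ne\psi_0$ terms gives the stated formula. No single step is genuinely difficult; the only points requiring care are checking that $F$ is truly symmetric so that Proposition \ref{thm1.1} is applicable, and correctly bookkeeping the per-cycle character powers $\psi^i$ when restricting to $X_\tau$.
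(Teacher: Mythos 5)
Your proof is correct and follows essentially the same route as the paper: character orthogonality over $\hat G$ combined with the symmetric sieving formula (Proposition~\ref{thm1.1}), with the identical per-cycle evaluation giving $F_\tau(\psi)=\prod_i(\sum_{a\in D}\psi^i(f(a)))^{c_i}$. The only cosmetic difference is that the paper splits off the principal character \emph{before} applying the sieve, so the main term appears directly as $|\overline{X}|=(m)_k$, whereas you sieve all characters at once and then recover the main term from the classical identity $\sum_{\tau\in S_k}\sign(\tau)\,m^{l(\tau)}=(m)_k$; both orderings are valid.
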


 \begin{proof}
Let $X=D\times D \times \cdots \times D$ be the Cartesian product of
$k$ copies of $D$. Define $  \overline{X} =\left\{
(x_1,x_2,\dots,x_{k} )\in D^k \mid
 x_i\not=x_j,~ \forall i\ne j\} \right\}$ to be the set of all distinct configurations
 in $X$. It is clear that $|X|=m^k$ and
$|\overline{X}|=(m)_k$. Applying the orthogonal relations of the
characters, one deduces that
\begin{align*}
k!N_f(D, k, b)&=\frac{1}{q} \sum_{(x_1, x_2,\dots x_k) \in \overline{X}}
\sum_{\psi\in \hat{G}}\psi(f(x_1)+f(x_2)+\cdots +f(x_k)-b)\\
&=\frac{1}{q}  {(m)_k}+\frac{1}{q} \sum_{\psi\ne
\psi_0}\psi^{-1}(b)\sum_{(x_1,x_2,\dots x_k)
\in\overline{X}}\prod_{i=1}^{k} \psi(f(x_i)).
\end{align*}
For $\psi\ne \psi_0$, let $f_{\psi}(x)=
f_{\psi}(x_1,x_2,\dots,x_{k})= \prod_{i=1}^{k}\psi(f(x_i))$.  For
  $\tau\in S_k$,  let
$$F_{\tau}(\psi)=\sum_{x\in X_{\tau}}f_{\psi}(x)=\sum_{x \in X_{\tau}}\prod_{i=1}^{k} \psi(f(x_i)),$$
where $X_{\tau}$ is defined as in equation (\ref{1.1}). Obviously $X$ is
symmetric and $f_{\psi}(x_1,x_2,\dots,x_{k})$ is also symmetric on $X$.
Applying equation (\ref{7}) in Corollary \ref{thm1.1}, we have
  \begin{align*}
 k!N_f(D, k, b)&=\frac{1}{q} {(m)_k}+\frac{1}{q} \sum_{\psi\ne \psi_0}
 \psi^{-1}(b) \sum_{\tau\in
C_{k}}\sign(\tau)C(\tau) F_{\tau}(\psi),
  \end{align*}
 where $C_{k}$ is the set of all conjugacy classes
 of $S_{k}$ and $C(\tau)$ counts the number of permutations conjugate to $\tau$.
For  $\tau\in C_{k}$, assume $\tau$ is of type
$(c_1,c_2,\dots,c_{k})$, where $c_i$ is the number of $i$-cycles in
$\tau$ for $1 \leq i\leq k$. Note that $\sum_{i=1}^{k} ic_i=k$.
Write
$$\tau=(i_1)(i_2)\cdots(i_{c_1})(i_{c_1+1}i_{c_1+2})(i_{c_1+3}i_{c_1+4})\cdots
(i_{c_1+2c_2-1}i_{c_1+2c_2})\cdots.$$
One checks that
  \begin{align*}
      X_{\tau}=\left\{
(x_1,\dots,x_k)\in D^k,
 x_{i_{c_1+1}}=x_{i_{c_1+2}}, \cdots,  x_{i_{c_1+2c_2-1}}=x_{i_{c_1+2c_2}},\cdots
 \right\}.
\end{align*}
Then we have
  \begin{align*}
F_{\tau}(\psi)&=\sum_{x \in X_{\tau}}\prod_{i=1}^{k} \psi(f(x_i))\\
&=\sum_{x \in X_{\tau}}\prod_{i=1}^{c_1}
\psi(f(x_i))\prod_{i=1}^{c_2}
\psi^2(f(x_{c_1+2i}))\cdots\prod_{i=1}^{c_k} \psi^k(f(x_{c_1+c_2+\cdots+k i}))\\
 &=\prod_{i=1}^{k}(\sum_{a\in D}\psi^i(f(a)))^{c_i}.
\end{align*}
\end{proof}

The above lemma reduces the study of the asymptotic formula for $N_f(D, k,b)$ to the estimate of the
partial character sum $\sum_{a\in D} \psi(f(a))$ and another sum through $\psi$.
 This is very difficult in general. However, if either $D$ is large compared to
$R$, or $D$ and $f(x)$ have some nice algebraic structures, one expects non-trivial estimates.
 One important example is the case that $D=\mathbb{F}_p^*$ and  $f(x)=x^d$. As we have mentioned in the introduction section, a series of works by Garcia-Voloch, Heath Brown, Konyagin-Shparlinski, Konyagin using variants of
Stepanov¡¯s method ($d <p^{3/4-\epsilon}$), and by Bourgain and Konyagin using additive combinatorics and harmonic analysis ($ d <p^{1-\epsilon}$)) shows that in this case $D$ has a nice pseudo random property.

 We are now ready to use the above lemma to prove our main results
by estimating various partial character sums and different summations in
different cases.

\section{The Residue Ring Case $R=\z_n$}

We first recall the following results on character sums over the residue class ring.

\begin{lem}[Hua and Lu \cite{Hua1, Lu}]\label{5.1}
Suppose $\psi$ is a primitive additive character of the
group $\z_n$.
  Let $f(x)=\sum_{i=0}^d a_i x^i \in \mathbb{Z}[x]$ be a polynomial of positive degree $d$.
  If $(a_1, \cdots, a_d, n)=1$, then
\begin{align*}
 |\sum_{x \in \z_n}\psi(f(x))|\leq e^{1.85d}n^{1-\frac 1d}.
\end{align*}
Thus if $D\subseteq \z_n$ with $|D|=n-c$, then
\begin{align}\label{4.1}
 |\sum_{x \in D}\psi(f(x))|\leq e^{1.85d}n^{1-\frac 1d}+c.
\end{align}
\end{lem}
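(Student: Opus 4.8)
The plan is to treat the first inequality as the substantive input and to derive the second by an elementary completion argument. The bound $\left|\sum_{x\in\z_n}\psi(f(x))\right|\le e^{1.85d}n^{1-\frac 1d}$ is exactly the Hua--Lu estimate established in \cite{Hua1, Lu}, so I would cite it verbatim rather than attempt to reprove it; all of the analytic depth of the lemma is concentrated there. Before invoking it I would only check that the coprimality hypothesis $(a_1,\dots,a_d,n)=1$ of the present lemma is the normalization required by the cited theorem: this condition ensures that, modulo every prime power dividing $n$, the non-constant part of $f$ does not collapse, which is precisely what forces the character sum to exhibit the stated $n^{1-\frac1d}$ cancellation. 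Without some such hypothesis the sum could be as large as $n$, so this matching of assumptions is the one point deserving explicit mention.

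For the passage from the full group $\z_n$ to an arbitrary subset $D$ with $|D|=n-c$, I would simply write the sum over $D$ as the full sum minus the sum over the complement:
\[
\sum_{x\in D}\psi(f(x))=\sum_{x\in\z_n}\psi(f(x))-\sum_{x\in\z_n\setminus D}\psi(f(x)).
\]
Applying the triangle inequality, the first term on the right is bounded by $e^{1.85d}n^{1-\frac1d}$ by the Hua--Lu estimate just quoted. For the second term, every additive character $\psi$ takes values on the unit circle, so $|\psi(f(x))|=1$ for each $x$, while $\z_n\setminus D$ has exactly $c$ elements; hence the second sum has modulus at most $c$. Combining the two bounds gives $\left|\sum_{x\in D}\psi(f(x))\right|\le e^{1.85d}n^{1-\frac1d}+c$, which is the second displayed inequality~(\ref{4.1}).

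I do not anticipate any genuine obstacle in this argument, since the completion step is a one-line use of the triangle inequality together with $|\psi|=1$, and the deep cancellation is imported wholesale from \cite{Hua1, Lu}. The only mild subtlety, as noted above, is verifying that the hypothesis on the coefficients of $f$ is stated in the form demanded by the cited bound; once that is confirmed, the derivation of~(\ref{4.1}) is immediate.
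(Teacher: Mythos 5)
Your proposal matches the paper exactly: the deep bound over all of $\z_n$ is imported from Hua and Lu without proof, and the subset version~(\ref{4.1}) follows by writing $\sum_{x\in D}=\sum_{x\in\z_n}-\sum_{x\in\z_n\setminus D}$ and bounding the complement sum trivially by $c$ since $|\psi(f(x))|=1$. This is precisely the one-line completion the paper's ``Thus'' indicates, so your argument is correct and identical in approach.
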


For $d\geq 3$, the bound (5.1) can be improved to
\begin{align*}
 |\sum_{x \in \z_n}\psi(f(x))|\leq e^{1.74d}n^{1-\frac 1d}.
\end{align*}
by Ding and Qi \cite{DQ}. See also Ste\v{c}kin \cite{Ste} for
an asymptotically  better but not explicit bound for large $d$.

When $n$ is a prime power, Hua \cite{Hua1, Hua2, Hua3} first obtained the bound
\begin{align*}
 |\sum_{x \in \z_n}\psi(f(x))|\leq d^3 n^{1-\frac 1d},
\end{align*} and it was improved by many mathematicians including Chen, Chalk, Ding, Loh, Lu, Mit'kin, Ne\v{c}aev and Ste\v{c}kin \cite{CZ}.
The current best bound is proved by Cochrane and Zheng.
\begin{lem}[Cochrane and Zheng, \cite{CZ0}]
Suppose $\psi$ is a primitive additive character of the
group $\z_n$.
 Let $f(x)=\sum_{i=0}^d a_i x^i \in \mathbb{Z}[x]$ be a polynomial of positive degree $d$.
 Assume $n=p^t$ and $(a_1, \cdots, a_d, p)=1$.
Then
\begin{align*}
 |\sum_{x \in \z_n}\psi(f(x))|\leq 4.41 n^{1-\frac 1d}.
\end{align*}
Similarly, if $D\subseteq \z_n$ with $|D|=n-c$, then
\begin{align*}
 |\sum_{x \in D}\psi(f(x))|\leq 4.41 n^{1-\frac 1d}+c.
\end{align*}
\end{lem}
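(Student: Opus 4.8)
The plan is to separate the statement into its two displayed inequalities, which are of completely different depths. The first, the complete-sum bound $\bigl|\sum_{x\in\z_n}\psi(f(x))\bigr|\le 4.41\,n^{1-1/d}$ for $n=p^t$, is the genuine arithmetic input and is precisely the main theorem of Cochrane and Zheng \cite{CZ0}; I would import it verbatim rather than reprove it. For orientation on where its difficulty lies: the proof runs through a $p$-adic analysis that localizes the complete sum modulo $p^t$ onto the critical points of $f$ modulo $p$, i.e.\ the zeros of $f'$, of which there are at most $d-1$. The hypothesis $(a_1,\dots,a_d,p)=1$ ensures $f$ is nonconstant modulo $p$ so that $f'\not\equiv 0$ and this localization is meaningful; the orders of vanishing of $f'$ at these critical points control the exponent, producing the shape $n^{1-1/d}$, and a careful optimization of the local contributions yields the absolute constant $4.41$. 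This is by far the hardest step, and it is exactly the part we are citing.

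Granting the complete-sum bound, the second inequality is an immediate triangle-inequality reduction, in exact analogy with (\ref{4.1}) following Lemma \ref{5.1}. Since $|D|=n-c$, the complement $\z_n\setminus D$ has exactly $c$ elements, so I would split
$$\sum_{x\in D}\psi(f(x))=\sum_{x\in\z_n}\psi(f(x))-\sum_{x\in\z_n\setminus D}\psi(f(x)).$$
Because $\psi$ takes values in the unit circle, each term of the second sum has modulus $1$, giving $\bigl|\sum_{x\in\z_n\setminus D}\psi(f(x))\bigr|\le c$. Combining with the first inequality yields
$$\Bigl|\sum_{x\in D}\psi(f(x))\Bigr|\le 4.41\,n^{1-1/d}+c,$$
as claimed, with no further hypotheses needed.

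Thus essentially all the content sits in the cited bound of \cite{CZ0}, and the lemma follows by discarding at most $c$ unit-modulus terms. This is the form in which the estimate will enter the proof of Theorem \ref{theorem1.2}, where in the prime-power case it lets the factor $e^{1.85d}$ be replaced by the absolute constant $4.41$.
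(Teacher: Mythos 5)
Your proposal matches the paper's treatment exactly: the complete-sum bound is simply cited from Cochrane and Zheng \cite{CZ0} (the paper gives no proof of it either), and the incomplete-sum bound over $D$ follows by subtracting the at most $c$ unit-modulus terms of the complement, just as the paper does implicitly. No gaps; your parenthetical gloss on the internals of \cite{CZ0} is inessential to the argument and does not affect correctness.
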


For readers interested in the exponential sums over $\z_n$, we refer
to a good survey by Cochrane and Zheng \cite{CZ}.


{\bf Proof of Theorem for $R=\z_n$}.
Let $\psi_0$ be the principal character sending each element in
$\z_n$ to 1. Also denote by $\hat{\z}_n$ the group of additive
characters of $\z_n$. Let $N_f(D,
k, b)$ be the number of $k$-subsets $S\subseteq D$ such that
$\sum_{x\in S}f(x)=b$.  Write $|D|=m$. Applying Lemma
\ref{lem3.1},  we have

 \begin{align*}
 k!N_f(D, k, b)&=\frac{1}{n} {(m)_k}+\frac{1}{n} \sum_{\psi\in \hat{\z}_n, \psi\ne \psi_0}
 \psi^{-1}(b) \sum_{\tau\in
C_{k}}\sign(\tau)C(\tau) F_{\tau}(\psi),
  \end{align*}
  where  $C_{k}$ is the set of all conjugacy classes
 of $S_{k}$ and $C(\tau)$ counts the number of permutations conjugate to
 $\tau$, and
   \begin{align*}
F_{\tau}(\psi)=\prod_{i=1}^{k}(\sum_{a\in D}\psi^i(f(a)))^{c_i}.
\end{align*}
Let $C_d = e^{1.85d}$ for general $n$ and $C_d=4.41$ for prime power $n=p^t$.
Applying equation \ref{4.1} in Lemma \ref{5.1}, if  $\psi$ is  primitive, then
$$|F_{\tau}(\psi)| \leq
m^{\sum_{i=1}^{k}c_im_i(\psi)}(C_dn^{1-\frac 1d
}+c)^{\sum_{i=1}^{k} c_i(1-m_i(\psi))},$$  where $m_i(\psi)$ is
defined as follows:
 $ m_i(\psi)=1$ if $(i,n)>1$ and  $ m_i(\psi)=0$ if $(i,n)=1$.
 Similarly, if $\order(\psi)=h, h\mid n$, then
 \begin{align*}
 |\sum_{x \in \z_n}\psi(f(x))|=\frac n h |\sum_{x \in \z_h}\psi(f(x))|\leq C_d nh^{-\frac 1d}.
\end{align*}
Thus
 $$|F_{\tau}(\psi)| \leq
m^{\sum_{i=1}^{k}c_im_i(\psi)}(C_dn h^{-\frac 1d
}+c)^{\sum_{i=1}^{k} c_i(1-m_i(\psi))},$$
where  $ m_i(\psi)=1$ if $(i,h)>1$ and  $ m_i(\psi)=0$ if $(i,h)=1$.

Let $p=p(n)$ be the smallest prime divisor of $n$.
Assume
$$m \geq \max_{h\mid n, h\ne 1} \{C_dn h^{-\frac 1d
}+c\}=C_dn p^{-\frac 1d
}+c.$$
Then we have
   \begin{align*}
 k!N_f(D, k, b)&=\frac{1}{n} {(m)_k}+\frac{1}{n} \sum_{1\ne h\mid n}\sum_{\psi, \order(\psi)=h}
 \psi^{-1}(b) \sum_{\tau\in
C_{k}}\sign(\tau)C(\tau) F_{\tau}(\psi)\\
&\geq \frac{1}{n} {(m)_k}-\frac{1}{n}
 \sum_{1\ne h\mid n}\phi(h)\sum_{\tau\in
C_{k}}C(\tau) m^{\sum_{j=1, (h, j)>1}^kc_{j}}( C_dnh^{-\frac
1d }+c)^{\sum_{j=1, (h, j)=1}^{k}
c_j}\\
 &\geq\! \frac{1}{n} {(m)_k}- \frac{1}{n}
 \sum_{\!1\ne h\mid n}\phi(t)( C_dnh^{-\frac 1d}+c+\! (m- C_dnh^{-\frac 1d}-c)(\!\!\sum_{i\mid h, \mu(i)=-1}\frac {1}{i})+k-1)_k
 \end{align*}

 Define $\delta(h)=\sum_{i\mid h, \mu(i)=-1}\frac 1 i $. Obviously $\max\{\delta(h), h\mid n\}=\delta(n)$. Hence
   \begin{align*}
    k!N_f(D, k, b)
  &\geq\frac{1}{n} {(m)_k}- (C_dnp^{-\frac 1d}+c+ (m- C_dnp^{-\frac 1d}-c)\delta(n)+k-1)_k\\
   &=\frac{1}{n} {(m)_k}- (\delta(n)m+(1-\delta(n)(C_dnp^{-\frac 1d}+c)+k-1)_k.
\end{align*}
 The second inequality follows from Lemma \ref{lem6.3}.

\section{The Finite Field Case $R=\f$}

 For our proof, we first recall Weil's character sum estimate in the
following form \cite{Wan}.
\begin{lem}[Weil]
Suppose $\psi$ is a non-trivial additive character of the additive group $\f$.
  Let $f(x)\in \f[x]$ be a polynomial of degree $d$ not divisible by $p$.
Then,
\begin{align*}
 |\sum_{x\in{\bf F}_q}\psi(f(x))|\leq(d-1)\sqrt{q}.
\end{align*}
\end{lem}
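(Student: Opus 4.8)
The plan is to realize the exponential sum as the first power-sum of an $L$-function attached to an Artin--Schreier cover, and then to bound its reciprocal roots via Weil's Riemann Hypothesis for curves. Fix a primitive $p$-th root of unity $\zeta_p$ and write the character as $\psi(t)=\zeta_p^{\mathrm{Tr}_{\f/\ff}(t)}$ (any nontrivial $\psi$ has this shape after replacing $f$ by $af$ for a suitable $a\in\f^*$, which preserves both $\deg f=d$ and $p\nmid d$). For each $m\ge 1$ set $S_m=\sum_{x\in\mathbb{F}_{q^m}}\psi\bigl(\mathrm{Tr}_{\mathbb{F}_{q^m}/\f}(f(x))\bigr)$, so the quantity to bound is $S_1$, and form the $L$-function $L(\psi,T)=\exp\bigl(\sum_{m\ge1}S_m T^m/m\bigr)$.

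The geometric input is the curve $C:\ y^p-y=f(x)$ over $\f$. First I would record the pointwise identity $\#\{y\in\mathbb{F}_{q^m}: y^p-y=c\}=\sum_{j=0}^{p-1}\zeta_p^{j\,\mathrm{Tr}_{\mathbb{F}_{q^m}/\ff}(c)}$, equal to $p$ when the trace vanishes and $0$ otherwise. Summing over $x$ shows that the number of affine $\mathbb{F}_{q^m}$-points of $C$ equals $q^m+\sum_{j=1}^{p-1}S_m^{(j)}$, where $S_m^{(j)}$ is the analogue of $S_m$ for the character $\psi^j$. Thus $L(\psi,T)$ is precisely the $\psi$-isotypic piece of the zeta function of $C$, and the whole problem reduces to controlling its reciprocal roots.

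I would then assemble two facts: (i) $L(\psi,T)$ is a \emph{polynomial} of degree $d-1$; and (ii) each of its reciprocal roots has absolute value exactly $q^{1/2}$. For (i) the hypothesis $p\nmid d$ is decisive: it forces $C\to\mathbb{A}^1$ to be ramified only over $x=\infty$, with break (Swan conductor) there equal to $d$, so a Riemann--Hurwitz computation gives smooth-model genus $g=(p-1)(d-1)/2$, and the Grothendieck--Ogg--Shafarevich (Euler--Poincar\'e) formula then pins the $\psi$-piece to degree $d-1$; were $p\mid d$ one could absorb the top term into a $p$-th power and the degree would drop, so this genuinely uses the hypothesis. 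Fact (ii) is Weil's theorem (purity, the Riemann Hypothesis for curves). Granting these, write $L(\psi,T)=\prod_{i=1}^{d-1}(1-\alpha_i T)$ with $|\alpha_i|=q^{1/2}$; comparing the $T^1$-coefficients of $\log L$ gives $S_1=-\sum_{i=1}^{d-1}\alpha_i$, whence $|S_1|\le (d-1)q^{1/2}$, as claimed.

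I expect the main obstacle to be fact (ii), Weil's Riemann Hypothesis for curves, which is the deep ingredient and exactly what the paper offloads by citing \cite{Wan}; the rest is bookkeeping about the Artin--Schreier cover. A more elementary route would replace (ii) by Stepanov's polynomial method. Alternatively one could skip the $L$-function decomposition and apply Weil's bound $|\#\tilde C(\mathbb{F}_q)-(q+1)|\le 2g\,q^{1/2}=(p-1)(d-1)q^{1/2}$ directly to the smooth model $\tilde C$ (with its single point at infinity); but that only controls $\sum_{j=1}^{p-1}S_1^{(j)}$ in aggregate, and isolating the single character $\psi$ still requires separating the Frobenius eigenvalues by the Galois action, i.e.\ the same isotypic decomposition.
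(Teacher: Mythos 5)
The paper does not actually prove this lemma: it is quoted as a classical result (Weil's bound, cited from \cite{Wan}), so there is no internal proof to compare against. Your sketch is the standard and correct route to it: realize $S_1=\sum_{x\in\f}\psi(f(x))$ as $-\sum_i\alpha_i$ where the $\alpha_i$ are the reciprocal roots of the Artin--Schreier $L$-function $L(\psi,T)=\exp\bigl(\sum_m S_mT^m/m\bigr)$, show $L(\psi,T)$ is a polynomial of degree $d-1$ (here $p\nmid d$ is exactly what keeps the Swan conductor at infinity equal to $d$, so the Euler--Poincar\'e formula gives degree $d+1-2=d-1$; if $p\mid d$ one could lower the degree by an Artin--Schreier substitution $f\mapsto f+g^p-g$ and the bound would improve, which is why the stated form needs the hypothesis), and invoke the Riemann Hypothesis for the curve $y^p-y=f(x)$ of genus $(p-1)(d-1)/2$ to get $|\alpha_i|=q^{1/2}$. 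Your bookkeeping is right, including the sign $S_1=-\sum_i\alpha_i$ and the point that the naive point-count bound $|\#\tilde C(\f)-(q+1)|\le (p-1)(d-1)q^{1/2}$ only controls the aggregate $\sum_{j=1}^{p-1}S_1^{(j)}$ and one must pass to the $\psi$-isotypic piece. The honest caveat is that this is a proof sketch conditional on two substantial theorems (rationality/degree of the $L$-function and the RH for curves), the second of which is the deep content; the paper treats the entire lemma as that same black box, so your proposal neither adds a gap nor removes one, but it does correctly identify where the difficulty lives.
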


\begin{cor}\label{4.2}
Suppose $\psi$ is a non-trivial additive character of the additive group $\f$.
  Let $f(x)\in \f[x]$ be a polynomial of degree $d$ not divisible by $p$.
Suppose $D\subseteq\f$ and $|D|=q-c$. Then,
\begin{align}\label{5.1}
 |\sum_{x\in D}\psi(f(x))|\leq(d-1)\sqrt{q}+c.
\end{align}
\end{cor}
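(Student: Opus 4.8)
The plan is to reduce the partial character sum over $D$ to the complete character sum over all of $\f$, to which the Weil bound of the preceding lemma applies directly. The point is that $D$ is obtained from $\f$ by deleting only $c$ elements, so the two sums differ by a sum of at most $c$ terms, each of absolute value $1$.

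First I would introduce the complement $E=\f\setminus D$, which has cardinality $|E|=q-(q-c)=c$, and split the sum as
$$\sum_{x\in D}\psi(f(x))=\sum_{x\in \f}\psi(f(x))-\sum_{x\in E}\psi(f(x)).$$
Applying the triangle inequality gives
$$\left|\sum_{x\in D}\psi(f(x))\right|\leq\left|\sum_{x\in \f}\psi(f(x))\right|+\left|\sum_{x\in E}\psi(f(x))\right|.$$
The first term on the right is bounded by $(d-1)\sqrt{q}$ by the Weil estimate, using the hypotheses that $\psi$ is non-trivial and that $\deg f=d$ is not divisible by $p$. For the second term I would use that $\psi$ takes values in the roots of unity, so $|\psi(f(x))|=1$ for every $x$, whence $\left|\sum_{x\in E}\psi(f(x))\right|\leq\sum_{x\in E}|\psi(f(x))|=|E|=c$. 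Combining the two bounds yields exactly $(d-1)\sqrt{q}+c$, which is inequality (\ref{5.1}).

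There is no genuine obstacle here: the entire analytic content is carried by Weil's bound, which is quoted as a lemma immediately above, and the corollary is a purely formal consequence obtained by splitting off the $c$ deleted points and estimating them trivially. The only thing one must be slightly careful about is to track the cardinality of the complement correctly and to invoke that the character values have modulus one, both of which are routine.
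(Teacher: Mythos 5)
Your proof is correct and is exactly the argument the paper intends (the corollary is stated without proof there, being an immediate consequence of the Weil lemma): subtract the sum over the complement of size $c$, bound it trivially by $c$ since character values have modulus one, and apply Weil's bound to the complete sum. Nothing further is needed.
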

%
%
%
%
%

{\bf Proof of Theorem for $R=\f$}.
Write $|D|=m$. Applying Lemma \ref{lem3.1},  we have

 \begin{align*}
 k!N_f(D, k, b)&=\frac{1}{q} {(m)_k}+\frac{1}{q} \sum_{\psi\ne \psi_0}
 \psi^{-1}(b) \sum_{\tau\in
C_{k}}\sign(\tau)C(\tau) F_{\tau}(\psi),
  \end{align*}
  where  $C_{k}$ is the set of all conjugacy classes
 of $S_{k}$ and $C(\tau)$ counts the number of permutations conjugate to
 $\tau$, and
   \begin{align*}
F_{\tau}(\psi)=\prod_{i=1}^{k}(\sum_{a\in D}\psi^i(f(a)))^{c_i}.
\end{align*}
Applying equation \ref{5.1} in Corollary \ref{4.2}, we have 
$$|F_{\tau}(\psi)| \leq
m^{\sum_{i=1}^{k}c_im_i(\psi)}((d-1)q^{\frac 12}+c)^{\sum_{i=1}^{k}
c_i(1-m_i(\psi))},$$  where $m_i(\psi)$ is defined as follows:
 $ m_i(\psi)=1$ if $\psi^i=1$ and  $ m_i(\psi)=0$ if $\psi^i\neq1$.

Since the additive group of $\f$ is $p$-elementary,  for nontrivial
character $\psi$, $\order(\psi)=p$. Thus $\psi^i=1$ if and only if  $p\mid i$.
Assume $m \geq (d-1)q^{\frac 12}+c$. We deduce
   \begin{align*}
 k!N_f(D, k, b)&=\frac{1}{q} {(m)_k}+\frac{1}{q} \sum_{\psi, \order(\psi)=p}
 \psi^{-1}(b) \sum_{\tau\in
C_{k}}\sign(\tau)C(\tau) F_{\tau}(\psi)\\
 &\geq \frac{1}{q} {(m)_k}-\frac{q-1}{q}
 \sum_{\tau\in
C_{k}}C(\tau) m^{\sum_{j=1, p \mid
j}^kc_{j}}((d-1)q^{\frac 12}+c)^{\sum_{j=1, p\nmid j}^{k}
c_j}\\
 &\geq \frac{1}{q} {(m)_k}- (\frac{m}{p}+\frac {p-1}p ((d-1)q^{\frac 12}+c)+k-1)_k. \qedhere
\end{align*}
The last equality follows from Lemma \ref{lem6.4} in the case that $d=p$.

\section{The Case $f(x)=x$, $R=G$ and $D\subseteq G$ arbitrary}

{\bf Proof of Theorem for $R=G$}.  Let $\hat{G}$ be the group of
additive characters of $G$ and let $\psi_0$ be the principal
character sending each element in $G$ to 1.   Let $N(D, k, b)$ be the number of $k$-subsets
$S\subseteq D$ such that $\sum_{x\in S}x=b$.  Write $|G|=n$ and $|D|=m=n-c$.
Suppose $m>c$.
Applying Lemma \ref{lem3.1},  we have

 \begin{align*}
 k!N(D, k, b)&=\frac{1}{n} {(m)_k}+\frac{1}{n} \sum_{\psi\ne \psi_0}
 \psi^{-1}(b) \sum_{\tau\in
C_{k}}\sign(\tau)C(\tau) F_{\tau}(\psi),
  \end{align*}
  where  $C_{k}$ is the set of all conjugacy classes
 of $S_{k}$ and $C(\tau)$ counts the number of permutations conjugate to
 $\tau$, and
   \begin{align*}
F_{\tau}(\psi)=\prod_{i=1}^{k}(\sum_{a\in D}\psi^i(a))^{c_i}.
\end{align*}
A trivial character sum bound gives
$$|F_{\tau}(\psi)| \leq
m^{\sum_{i=1}^{k}c_im_i(\psi)}c^{\sum_{i=1}^k c_i(1-m_i(\psi))},$$
where $m_i(\psi)$ is defined as follows:
 $ m_i(\psi)=1$ if $\psi^i=1$ and  $ m_i(\psi)=0$ if $\psi^i\neq1$.

Let $e(G)$ be the exponent of $G$ and so it is also the exponent of
$\hat{G}$. Thus for nontrivial character $\psi$, $m_i(\psi)=0$ if $(e(G), i)=1$.  Since $m>c$, we have
   \begin{align*}
 k!N(D, k, b)&=\frac{1}{n} {(m)_k}+\frac{1}{n} \sum_{\psi\ne \psi_0}
 \psi^{-1}(b) \sum_{\tau\in
C_{k}}\sign(\tau)C(\tau) F_{\tau}(\psi)\\
&\geq \frac{1}{n} {(m)_k}- \frac{n-1}{n}
 \sum_{\tau\in
C_{k}}C(\tau) m^{\sum_{j=1, (e(G), j)>1}^kc_{j}}c
^{\sum_{j=1, (e(G), j)=1}^{k}
c_j}\\
 &\geq \frac{1}{n} {(m)_k}- (c + (m-c)\sum_{i\mid e(G), \mu(i)=-1}\frac {1}{i}+k-1)_k, \qedhere
\end{align*}
where the last equality follows directly from Lemma \ref{lem6.3}.

\section{The Case $f(x)=x$, $R=G$ and $D=G$}

{\bf Proof of Theorem for $D=R=G$}. The proof is quite similar as the last case.
In this case, $|D|=|G|=m$ and $c=0$.  Applying Lemma \ref{lem3.1},  we have

 \begin{align*}
 k!N(D, k, b)&=\frac{1}{n} {(n)_k}+\frac{1}{n} \sum_{\psi\ne \psi_0}
 \psi^{-1}(b) \sum_{\tau\in
C_{k}}\sign(\tau)C(\tau) F_{\tau}(\psi),
  \end{align*}
  where  $C_{k}$ is the set of all conjugacy classes
 of $S_{k}$ and $C(\tau)$ counts the number of permutations conjugate to
 $\tau$, and
   \begin{align*}
F_{\tau}(\psi)=\prod_{i=1}^{k}(\sum_{a\in G}\psi^i(a))^{c_i}.
\end{align*}
A trivial character sum computation gives
$$F_{\tau}(\psi)=
n^{\sum_{i=1}^{k}c_im_i(\psi)}0^{\sum_{i=1}^k c_i(1-m_i(\psi))},$$
where $m_i(\psi)$ is defined as follows:
 $ m_i(\psi)=1$ if $\psi^i=1$ and  $ m_i(\psi)=0$ if $\psi^i\neq1$.

Let $e(G)$ be the exponent of $G$ and so it is also the exponent of
$\hat{G}$. Thus for nontrivial character $\psi$, $m_i(\psi)=0$ if $(e(G), i)=1$.  We then have
   \begin{align*}
 k!N(D, k, b)&=\frac{1}{n} {(n)_k}+\frac{1}{n} \sum_{\psi\ne \psi_0}
 \psi^{-1}(b) \sum_{\tau=(c_1, c_2, \cdots, c_k)\in
C_{k}}\sign(\tau)C(\tau) F_{\tau}(\psi)\\
&=\frac{1}{n} {(n)_k}+\frac{1}{n} \sum_{1\ne d\mid n}\sum_{\psi, \order(\psi)=d}
 \psi^{-1}(b) \sum_{\tau \in C_k}\sign(\tau)C(\tau) n^{\sum_{i=1}^{k}c_im_i(\psi)}0^{\sum_{i=1}^k c_i(1-m_i(\psi))}.\\
 \end{align*}

 Since for $\tau=(c_1, c_2, \cdots, c_k)\in C_{k}, c_i=0 \Rightarrow m_i(\psi)=1$, we have

 \begin{align*}
  k!N(D, k, b)&=\frac{1}{n} {(n)_k}+\frac{1}{n} \sum_{1\ne d\mid n}\sum_{\psi, \order(\psi)=d}
 \psi^{-1}(b) \sum_{\tau \in C_k}\sign(\tau)C(\tau) n^{\sum_{i=1}^{k}c_i}\\
 &=\frac{1}{n} {(n)_k}+\frac{(-1)^k}{n} \sum_{1\ne d\mid n}\sum_{\psi, \order(\psi)=d}
 \psi^{-1}(b) \sum_{\tau \in C_k}(-1)^{c_1+c_2+\cdots+c_k}C(\tau) n^{\sum_{i=1}^{k}c_i}\\
  &=\frac{1}{n} {(n)_k}+\frac{(-1)^k}{n} \sum_{1\ne d\mid n}\sum_{\psi, \order(\psi)=d}
 \psi^{-1}(b) \sum_{\tau \in C_k}C(\tau) (-n)^{\sum_{i=1}^{k}c_i}\\
\end{align*}

From the formula given in Lemma \ref{lem6.4},
one has \begin{align*}
&\sum_{\tau=(c_1, \cdots, c_k)\in
C_{k} }
 C(\tau) (-n)^{\sum c_i}\\
  &=[\frac {t^k}{k!}]\frac {1}{(1-t^d)^{-n/d}}={-n/d+k/d-1 \choose k/d}=(-1)^{k/d}{n/d \choose k/d}.\\
\end{align*}

We then have
 \begin{align*}
  k!N(D, k, b) &=\frac{1}{n} {(n)_k}+\frac{(-1)^k}{n} \sum_{1\ne d\mid n, d\mid k}\sum_{\psi, \order(\psi)=d}
 \psi^{-1}(b) k!(-1)^{k/d}{n/d \choose k/d}\\
 &=\frac{1}{n} {(n)_k}+\frac{(-1)^k}{n} \sum_{1\ne d\mid (n,k)}k!(-1)^{k/d}{n/d \choose k/d}\sum_{\psi, \order(\psi)=d}
 \psi^{-1}(b).
\end{align*}
Thus
$$ N(D, k, b)=\frac{1}{n} {n \choose k}+\frac{1}{n} (-1)^{k+k/d}\sum_{1\ne d\mid (n, k)}{n/d \choose k/d}\sum_{\psi,
\order(\psi)=d} \psi(b),$$
 where  $\sum_{\psi, \order(\psi)=d}  \psi(b)$ is the Ramanujan sum.
  \qedhere

{ \bf Remark:}   This approach can be used to give explicit formulas when $G\backslash D$ is a very small constant.
%

%

{\bf Acknowledgements.} The authors wish to thank Professor Richard Stanley for his helpful suggestions.

\end{document}